\DeclareMathAlphabet{\pazocal}{OMS}{zplm}{m}{n}
\newtheorem{lemma}{Lemma}[section]
\newtheorem{corollary}[lemma]{Corollary}
\newtheorem{theorem}[lemma]{Theorem}
\newtheorem{proposition}[lemma]{Proposition}
\newtheorem{remark}[lemma]{Remark}
\newtheorem{definition}[lemma]{Definition}
\newtheorem{example}[lemma]{Example}
\newtheorem*{theorem*}{Theorem}
\tikzset{commutative diagrams/.cd,
mysymbol/.style={start anchor=center,end anchor=center,draw=none}
}
\def\A{\mathcal{A}}
\newcommand{\alg}{\text{\bf alg}}
\newcommand{\Alg}{\text{\bf Alg}}
\newcommand{\End}{\text{End}}
\definecolor{turquoise2}{rgb}{0,0.898039,0.933333}
\definecolor{magenta}{rgb}{1,0,1}
\def\F{\mathfrak{F}}
\def\Z{\mathbb Z}
\def\Zs{\tiny{\mathbb Z}}
\def\B{\mathcal B} 
\def\rng{\text{\bf rng}}
\def\a{\alpha}
\def\F{\mathcal{F}}
\def\remove#1{}
\begin{document}
\subjclass[2020] {16S88, 16S99, 18A05, 18A22} 
\keywords{Isomorphism of $K$-algebras, functor, prime field, extension of fields, Cohn and Leavitt path algebras.}

\thanks{The  authors are supported by the Spanish Ministerio de Ciencia e Innovaci\'on through the project  PID2019-104236GB-I00 and by the Junta de Andaluc\'{\i}a  through the projects  FQM-336 and UMA18-FEDERJA-119,  all of them with FEDER funds. The fourth author is supported by a Junta de Andalucía PID fellowship no. PREDOC\_00029.}

\title[On isomorphism conditions for algebra functors]{On isomorphism conditions for algebra functors with applications to Leavitt path algebras}

\begin{abstract} 
We introduce certain functors from the category of commutative rings (and related categories) to that of $\Z$-algebras (not necessarily associative or commutative). One of the motivating examples is the Leavitt path algebra functor $R\mapsto L_R(E)$ for a given graph $E$. Our goal is to find  \lq\lq descending\rq\rq\ isomorphism results of the type: if $\F,\mathcal{G}$ are algebra functors and $K\subset K'$ a field extension, under what conditions an isomorphism $\F(K')\cong \mathcal{G}(K')$ of $K'$-algebras implies the existence of an isomorphism $\F(K)\cong\mathcal{G}(K)$ of $K$-algebras? 
We find some positive answers to that problem for the so-called \lq\lq extension invariant functors\rq\rq\ which include the functors associated to Leavitt path algebras, Steinberg algebras, path algebras, group algebras, evolution algebras and others. For our purposes, we employ an extension of the Hilbert's Nullstellensatz Theorem for polynomials in possibly infinitely many variables,
as one of our main tools. We also remark that for extension invariant functors $\F,\mathcal{G}$, an isomorphism
$\F(H)\cong\mathcal{G}(H)$, for some Hopf $K$-algebra $H$,  implies the existence of an isomorphism $\F(S)\cong\mathcal{G}(S)$ for any commutative and unital $K$-algebra $S$.
\end{abstract}

\author[C. Gil]{Crist\'{o}bal Gil Canto}
\address{C. Gil Canto: Departamento de Matem\'atica Aplicada, E.T.S. Ingenier\'\i a Inform\'atica, Universidad de M\'alaga, Campus de Teatinos s/n. 29071 M\'alaga.   Spain.}
\email{cgilc@uma.es}

\author[D. Mart\'{\i}n]{Dolores Mart\'{\i}n Barquero}
\address{D. Mart\'{\i}n Barquero: Departamento de Matem\'atica Aplicada, Escuela de Ingenier\'{\i}as Industriales, Universidad de M\'alaga. 29071 M\'alaga. Spain.}
\email{dmartin@uma.es}

\author[C. Mart\'{\i}n]{C\'andido Mart\'{\i}n Gonz\'alez}
\address{C. Mart\'{\i}n Gonz\'alez:  Departamento de \'Algebra Geometr\'{\i}a y Topolog\'{\i}a, Fa\-cultad de Ciencias, Universidad de M\'alaga, Campus de Teatinos s/n. 29071 M\'alaga. Spain.}
\email{candido\_m@uma.es}
\author[I. Ruiz]{Iv\'an Ruiz Campos}
\address{I. Ruiz Campos:  Departamento de \'Algebra Geometr\'{\i}a y Topolog\'{\i}a, Fa\-cultad de Ciencias, Universidad de M\'alaga, Campus de Teatinos s/n. 29071 M\'alaga. Spain.}
\email{ivaruicam@uma.es}

\maketitle

\section{Introduction}

One of the isomorphism problems for Leavitt path algebras is the following: when $E_1$ and $E_2$ are graphs, $K$ a field and $L_K(E_1)\cong L_K(E_2)$ is an isomorphism of $K$-algebras, not necessarily preserving the diagonal and not necessarily graded, can we have a $K'$-algebra isomorphism $L_{K'}(E_1)\cong L_{K'}(E_2)$ for some other field $K'$? In the literature, there are positive answers to closely related problems. For example, in \cite{BCH} it is proved that the graph groupoids of directed graphs $E_1$ and $E_2$ are topologically isomorphic if and only if there is a diagonal-preserving ring $*$-isomorphisms between the Leavitt path algebras of $E_1$ and $E_2$. More generally, we can find questions dealing with the equivalence between the existence of graded isomorphism of groupoids and diagonal-preserving graded isomorphism of Steinberg algebras, like in the works \cite{ABHS}, \cite{CR} and \cite{S}. Another isomorphism-type problem is the following: assume $K\subset K'$ is a field extension and $L_{K'}(E_1)\cong L_{K'}(E_2)$ an isomorphism of $K'$-algebras (not necessarily preserving the diagonal). We would like to know under what conditions the above isomorphism implies the existence of an isomorphism $L_K(E_1)\cong L_K(E_2)$. This kind of results could be termed \lq\lq descending conditions for isomorphism\rq\rq. In case such a descending isomorphism condition exists for an extension $K\subset K'$, then $L_{K'}(E_1)\cong L_{K'}(E_2)$ implies $L_{K''}(E_1)\cong L_{K''}(E_2)$ for any superfield $K''\supset K$.
When trying to give partial solutions to these questions, we find that there is no need to focus on Leavitt path algebras. We can answer them using suitable functors which the Leavitt path algebra functor $R\mapsto L_R(E)$ is a particular case of. Such are the so-called extension invariant functors. One of the main tools that we use is the Hilbert's Nullstellensatz Theorem for polynomials with possibly infinitely many variables. This is a result of S. Lang \cite{L52}
which generalizes the classical Nullstellensatz Theorem but imposes an additional condition on the cardinal of the ground field.

The paper is organized as follows: in Section \ref{preliminares} we first introduce the concept of algebra functor and, in Definition \ref{defextension}, we establish what is the $K$-algebra functor $\underline{A}$ for a certain $K$-algebra $A$. On the one hand, in Definition \ref{vamonospaRonda}, we introduce the extension invariant functors, which will generalize the \lq\lq nice\rq\rq\  condition that the Leavitt path algebras satisfy. In fact, we will see that the extension invariant functors are precisely those of the form $\underline{A}$ for a certain $K$-algebra $A$. On the other hand, in Corollary \ref{Leoquierealgo} we remark that, for extension invariant functors $\F,\mathcal{G}$, once we have an isomorphism $\F(H)\cong\mathcal{G}(H)$ for some Hopf $K$-algebra $H$, this induces an isomorphism $\F(K)\cong\mathcal{G}(K)$. Hence $\F(S)\cong\mathcal{G}(S)$ for 
every commutative and unital $K$-algebra $S$. Besides, we will illustrate, in Example \ref{ejemploderivaciones}, an algebra functor which is not extension invariant thanks to derivations of an algebra. In Subsection \ref{subseccionpathalgebra}, we include some well-known interesting examples of algebras which provide extension invariant algebra functors. It is worth to mention that they are the main motivation for the results presented in this work. As a first corollary we get that for any two graphs $E_1,E_2$ and any Hopf $K$-algebra $H$, then $L_H(E_1)\cong L_H(E_2)$ if and only if $L_K(E_1)\cong L_K(E_2)$ (see Corollary \ref{Leavitthopf}).

Next, in Section \ref{seccionNull}, we will present the key to develop our machinery for the isomorphism theorems. According to \cite{L52}, the Hilbert's Nullstellensatz Theorem can be given in terms of infinite indeterminates. To achieve the hypotheses for Theorem \ref{extension}, it is necessary that either the set of indeterminates is finite (in fact, the original Hilbert's Nullstellensatz Theorem) or that the transcendence degree of the ground field over its prime one has cardinality strictly higher than the number of variables. 
To unify those hypotheses in a single condition, we introduce Lemmas \ref{Leo} and \ref{Leo2}. This new environment consists of that the cardinal of the ground field is strictly higher that the cardinal of the set of indeterminates. The original paper \cite{L52} by S. Lang informs that I. Kaplansky obtained independently the result, but the equivalent formulation of Kaplansky was slightly different and did not use transcendence degree. Since Kaplansky never published his work, this encourages us to include the proofs of Lemmas \ref{Leo} and \ref{Leo2}.

The remaining two sections are devoted, one for the finite dimensional issue (Section \ref{fdc}) and the other for the infinite one (Section \ref{idc}). The first case gives us a better understanding of the general one. The idea in both situations is analogous: we need to translate respectively the isomorphism problem into the existence of zeros of a certain ideal of polynomials. This corresponds to Proposition \ref{tecnicoUNO} and Corollary \ref{correspondencia}, for the finite dimensional case, and Proposition \ref{correspondenciaInf}, for the general one. For the finite dimension, as a consequence of Theorem \ref{extension} and Corollary \ref{correspondencia}, we prove Theorem \ref{VueltaAlCole} where for a field extension $K \subset F$ with $K$ algebraically closed, the isomorphism between $A \otimes_K F$ and $B \otimes_K F$ (as $F$-algebras) is equivalent to the isomorphism between $A$ and $B$ (as $K$-algebras). From this main assertion, varying the hypotheses on the field extensions, it will be derived a series of Corollaries \ref{mismaCaracteristica}, \ref{dosalgcerrados}, \ref{primefield}.  We complete the finite dimensional case by establishing the previous assertions in terms of extension invariant algebra functors giving a more global vision of the problem; these are Theorems \ref{zapaterobis} and \ref{zapatero}. In particular, in order to highlight our motivation, we assert Corollary \ref{granuja} and Corollary \ref{granujilla} in terms of relative Cohn path algebras.

The same idea is pursued in the infinite-dimensional section by Theorem \ref{VueltaAlCole2}. However, we need the additional hypothesis that the cardinal of the ground field $K$ is strictly higher that the dimension of the algebras. This hypothesis is essential in order to apply the Hilbert's Nullstellensatz Theorem for infinitely many variables. Along the lines of the finite dimensional case, Corollaries \ref{mismaCaracteristica2}, \ref{dosalgcerrados2} and \ref{primefield2} are followed jointly with a functorial interpretation for those extension invariant ones (Theorem \ref{VueltaAlCole2Funtorial} and Theorem \ref{aporotracosa}). Due to the fact that Leavitt path algebras become our major motivation, we include results that could be stated in the general setting of extension invariant functors, but that we expose in our particular favorite ambient. So, for instance, for countable graphs $E_1$ and $E_2$, an uncountable algebraically closed field $K$ and $K \subseteq F_1, F_2$, then $L_{F_1}(E_1)\cong L_{F_1}(E_2)$ if and only if $L_{F_2}(E_1)\cong L_{F_2}(E_2)$ (Corollary \ref{andale}).

\section{Preliminaries and first results}\label{preliminares}

For a commutative, associative and unital ring $K$ denote by $\alg_K$ the category of associative and commutative $K$-algebras with non-zero unit. We will also use the category $\Alg_K$ of all $K$-algebras (not necessarily associative, commutative or unital). A functor $\F\colon\alg_K\to\Alg_K$ such that $\F(R)$ is an $R$-algebra for any $R\in\alg_K$ will be called a {\em $K$-algebra functor.} 
We will identify the category $\alg_{\Zs}$ with the category of (associative commutative and unital) rings, usually denoted $\rng$.
Some well-known examples are in order. Take a positive integer $n$ and consider the functor $M_n\colon\rng\to\Alg_{\Zs}$ such that $M_n(R)$ is the $R$-algebra of $n\times n$ matrices with coefficients in $R$ with the usual matrix product. This functor produces associative algebras but this is not true for the functor $\text{sl}_n\colon\rng\to\Alg_{\Zs}$ such that $\text{sl}_n(R)$ is defined as the set of all zero-trace matrices of $M_n(R)$ with the Lie product $[x,y]:=xy-yx$ for any $x,y$. Another source of algebra functors, that we will use in the sequel, is the following: 

\begin{definition}\label{defextension}\rm
Let $A \in \Alg_K$, we are able to define the $K$-algebra functor  $\underline{A} \colon \alg_K \to \Alg_K$ given by $\underline{A}(R):=A \otimes_{K} R = A_R$. Furthermore, for any $R,R' \in \alg_K$, if $\alpha: R \to R'$ is a morphism in $\alg_K$, then $\underline{A}(\alpha) \colon \underline{A}(R) \to \underline{A}(R')$ is defined as $\underline{A}(\alpha)=1\otimes_K \alpha \colon A \otimes_K R \to A \otimes_K R'$ such that $a\otimes_K r\mapsto a\otimes_K \alpha(r)$.
\end{definition}

When it is understood by the context, the tensor product will be simply denoted by $\otimes$ instead of $\otimes_K$.

As we previously mentioned, this set of functors are well behaved. If $K$ is a commutative and unital ring, $R \in\alg_K$, $S\in\alg_R$ and $A \in\Alg_K$, then it is straightforward to prove that $(A \otimes_K R) \otimes_R S \cong A \otimes_K (R \otimes_R S)$. As a consequence, $(A_R)_S\cong A_S$ as $S$-algebras. For more details, define $\omega_{RS}\colon (A_R)_S\to A_S$ the isomorphism (of $S$-algebras) such that $\omega_{RS}((a\otimes_K r)\otimes_R s):=a\otimes_R rs$. This isomorphism is natural in $S$, that is, for any $\beta\in\hom_{\alg_R}(S,S')$ we have the commutativity of the diagram:
\begin{equation}\label{cuadradoomega}
\begin{tikzcd}
(A_R)_S \arrow[r, "\omega_{RS}"] \arrow[d, "1_{A_R}\otimes_R \beta"']
& A_S \arrow[d, "1_{A}\otimes_K \beta "] \\
(A_R)_{S'} \arrow[r,  "\omega_{RS'}"']
& A_{S'}
\end{tikzcd}
\end{equation}

One important property associated to $K$-algebra functors that we will use along this work, is introduced now.

\begin{definition}\label{vamonospaRonda}\rm
Let $K$ be a commutative and unital ring. A $K$-algebra functor $\F\colon\alg_K\to\Alg_K$ is said to be \emph{extension invariant} if 
for every $R\in\alg_K$ there is an $R$-algebra isomorphism $\tau_R\colon \F(R)\to\F(K)\otimes_K R$ in $\Alg_K$ which is natural in $R$, that is,
for any $\alpha \colon R \to R'$ homomorphism between $R,R' \in \alg_K$ the following diagram is commutative:
\begin{equation}\label{cuadradotao}
\begin{tikzcd}[cramped, sep=large]
\F(R) \arrow[r, "\tau_R"] \arrow[d, "\F{}(\a)"']
& \F(K)\otimes_{K} R \arrow[d, "1_{\F(K)}\otimes_K\alpha"] \\
\F(R') \arrow[r,  "\tau_{R'}"']
& \F(K)\otimes_K R'
\end{tikzcd}
\end{equation}
\end{definition}

\begin{proposition}\label{extenprop}
If $\F$ is an extension invariant functor, then
$\F(S)\cong \F(R)\otimes_R S$ as $S$-algebras for any $R\in\alg_K$ and $S\in\alg_R$. Furthermore, the above isomorphism can be chosen to be natural in $S$.
\end{proposition}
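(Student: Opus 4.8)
The plan is to build the desired isomorphism $\F(S)\cong\F(R)\otimes_R S$ by composing the canonical isomorphism $\tau_S\colon\F(S)\to\F(K)\otimes_K S$ coming from extension invariance of $\F$ (applied to the ring $S\in\alg_K$, which makes sense since $S\in\alg_R$ and $R\in\alg_K$ give $S\in\alg_K$) with an isomorphism $\F(K)\otimes_K S\cong\bigl(\F(K)\otimes_K R\bigr)\otimes_R S$ of $S$-algebras, and then with $\tau_R^{-1}\otimes_R 1_S\colon\bigl(\F(K)\otimes_K R\bigr)\otimes_R S\to\F(R)\otimes_R S$. For the middle isomorphism I would invoke the associativity of tensor product discussed right before Definition \ref{vamonospaRonda}: with $A:=\F(K)\in\Alg_K$ this is precisely $\omega_{RS}^{-1}\colon A_S\to (A_R)_S$, an isomorphism of $S$-algebras. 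Thus the composite
\[
\F(S)\xrightarrow{\ \tau_S\ }\F(K)\otimes_K S\xrightarrow{\ \omega_{RS}^{-1}\ }\bigl(\F(K)\otimes_K R\bigr)\otimes_R S\xrightarrow{\ \tau_R^{-1}\otimes_R 1_S\ }\F(R)\otimes_R S
\]
is the required $S$-algebra isomorphism.

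Next I would address naturality in $S$. Fix a morphism $\beta\colon S\to S'$ in $\alg_R$; this is also a morphism in $\alg_K$. I want the square relating the composites for $S$ and $S'$ to commute, where the left vertical arrow is $\F(\beta)$ and the right vertical arrow is $\tau_R^{-1}\otimes_R 1$ composed appropriately, i.e. $1_{\F(R)}\otimes_R\beta$. This follows by pasting three commuting squares: the naturality square \eqref{cuadradotao} for $\tau$ applied to $\beta$ (relating $\F(\beta)$ with $1_{\F(K)}\otimes_K\beta$); the naturality square \eqref{cuadradoomega} for $\omega$ in its second variable applied to $\beta$ (relating $1_{\F(K)}\otimes_K\beta$ with $1_{\F(K)_R}\otimes_R\beta$); and finally the trivial square expressing that $\tau_R^{-1}\otimes_R 1$ is compatible with applying $1\otimes_R\beta$ on both sides, i.e. $(\tau_R^{-1}\otimes_R 1_{S'})\circ(1_{\F(K)_R}\otimes_R\beta)=(1_{\F(R)}\otimes_R\beta)\circ(\tau_R^{-1}\otimes_R 1_S)$, which holds because $\tau_R^{-1}$ is a fixed map not depending on $S$. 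Concatenating these three squares yields naturality of the composite in $S$.

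I do not anticipate a genuine obstacle here: every ingredient is already in place in the excerpt, and the proof is a diagram chase. The one point demanding a little care is bookkeeping with the base rings — checking that $\omega_{RS}$ is an isomorphism \emph{of $S$-algebras} (not merely $R$-algebras or $K$-modules) and that $\tau_R^{-1}\otimes_R 1_S$ is well-defined as a map of $S$-algebras, given that $\tau_R$ is an $R$-algebra map; base change along $R\to S$ preserves algebra structure, so this is routine. The mild subtlety, rather than difficulty, is simply to keep the three naturality squares correctly oriented so that they paste together, and to note that the formula $\omega_{RS}((a\otimes_K r)\otimes_R s)=a\otimes_R rs$ combined with the naturality diagram \eqref{cuadradoomega} already gives exactly what is needed without any new computation.
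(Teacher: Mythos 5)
Your proposal is correct and follows essentially the same route as the paper: the paper's isomorphism is $\tau_S^{-1}\omega_{RS}(\tau_R\otimes 1_S)\colon \F(R)\otimes_R S\to\F(S)$, which is exactly the inverse of your composite, and its naturality is likewise proved by pasting the same three squares (the trivial square for $\tau_R\otimes 1$, the naturality square \eqref{cuadradoomega} for $\omega$, and the naturality square \eqref{cuadradotao} for $\tau$).
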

\begin{proof} The isomorphism is the composition $\tau_S^{-1}\omega_{RS}(\tau_R\otimes 1_S)$ and it is natural in $S$ because for any homomorphism of $R$-algebras $\beta \colon S\to S'$, we have the commutativity of the three squares in the diagram:
\begin{equation*}
\begin{tikzcd}
\F(R)\otimes_R S \arrow[r, "\tau_R \otimes 1_S"] \arrow[d, "1_{\F(R)}\otimes_R \beta"'] &(\F(K)\otimes_K R)\otimes_R S \arrow[r, "\omega_{RS}"]\arrow[d, "1_{\F(K)\otimes R}\otimes_R \beta"'] &\F(K)\otimes_K S\arrow[r, "\tau_S^{-1}"]\arrow[d, "1_{\F(K)}\otimes_K \beta"'] & \F(S)\arrow[d, "\F(\beta)"]
\\
\F(R)\otimes_R S' \arrow[r, "\tau_R \otimes 1_{S'}"]  &(\F(K)\otimes_K R)\otimes_R S' \arrow[r, "\omega_{RS'}"] &\F(K)\otimes_K S'\arrow[r, "\tau_{S'}^{-1}"] & \F(S')
\end{tikzcd}
\end{equation*}
because of the commutativity of \eqref{cuadradoomega} and \eqref{cuadradotao}.
\end{proof}

\begin{remark}
\label{nuevomac}\rm
Observe that if $\F \colon \alg_K \to \Alg_K$ is a $K$-algebra functor which is extension invariant, we can consider $\F(K)$ as a $K$-algebra, then $\F \cong \underline{\F(K)}$. Conversely, if  $A \in \Alg_K$, then the functor  $\underline{A}$ is extension invariant. That is, the extension invariant functors are precisely those algebra functors which are (naturally) isomorphic to some $\underline{A}$. 
\end{remark}

\begin{corollary}\label{Leoquierealgo}
If $\F,\mathcal{G} \colon\alg_K\to\Alg_K$ are extension invariant functors and $\varphi\colon R\to S$ a homomorphism in $\alg_K$, then 
$\F(R)\cong\mathcal{G}(R)$ implies $\F(S)\cong\mathcal{G}(S)$. In particular for $R \in \alg_K$, if we have an homomorphism $\psi \colon R \to K$, then $\F(R)\cong\mathcal{G}(R)$ implies that $\F(K)\cong\mathcal{G}(K)$, which means that $\F(S)\cong\mathcal{G}(S)$ for every $S \in \alg_K$. 
\end{corollary}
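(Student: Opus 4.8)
The plan is to reduce everything to Proposition~\ref{extenprop}, which already packages the tensor-product behaviour of extension invariant functors. First I would invoke Proposition~\ref{extenprop} twice: since $\varphi\colon R\to S$ makes $S$ into an object of $\alg_R$, we get $S$-algebra isomorphisms $\F(S)\cong\F(R)\otimes_R S$ and $\mathcal{G}(S)\cong\mathcal{G}(R)\otimes_R S$. Now given an $R$-algebra isomorphism $\theta\colon\F(R)\to\mathcal{G}(R)$, the map $\theta\otimes 1_S\colon\F(R)\otimes_R S\to\mathcal{G}(R)\otimes_R S$ is an isomorphism of $S$-algebras (base change along $R\to S$ of an $R$-algebra isomorphism is again an algebra isomorphism). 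Composing the three maps yields an $S$-algebra isomorphism $\F(S)\cong\mathcal{G}(S)$, which in particular is an isomorphism of $K$-algebras since $K\to R\to S$. This proves the first assertion.

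For the \emph{in particular} part, I would apply the first assertion to the homomorphism $\psi\colon R\to K$ in $\alg_K$ (this exists by hypothesis, e.g.\ an augmentation), obtaining $\F(R)\cong\mathcal{G}(R)\Rightarrow\F(K)\cong\mathcal{G}(K)$. Finally, for the last clause, note that every $S\in\alg_K$ comes equipped with the structural morphism $K\to S$ in $\alg_K$, so applying the first assertion once more (now with the extension $K\subset S$ in the role of $R\to S$) gives $\F(K)\cong\mathcal{G}(K)\Rightarrow\F(S)\cong\mathcal{G}(S)$ for every $S\in\alg_K$.

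There is no serious obstacle here; the only point requiring a word of care is checking that $\theta\otimes 1_S$ is genuinely an isomorphism of $S$-algebras and not merely of $S$-modules—but this is immediate because $\otimes_R S$ is a functor $\Alg_R\to\Alg_S$ (it preserves the multiplication by construction, $(a\otimes s)(a'\otimes s')=aa'\otimes ss'$), hence sends the isomorphism $\theta$ and its inverse to mutually inverse $S$-algebra homomorphisms. One should also observe that the naturality of the isomorphisms in Proposition~\ref{extenprop} is not needed for this corollary—plain existence of the isomorphisms suffices—so the proof is essentially a one-line diagram chase once Proposition~\ref{extenprop} is in hand.

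Alternatively, and perhaps more cleanly, I could use Remark~\ref{nuevomac}: writing $\F\cong\underline{A}$ and $\mathcal{G}\cong\underline{B}$ with $A=\F(K)$, $B=\mathcal{G}(K)$, an isomorphism $\F(R)\cong\mathcal{G}(R)$ is an isomorphism $A\otimes_K R\cong B\otimes_K R$ of $R$-algebras, and applying $-\otimes_R S$ together with the associativity isomorphism $(A\otimes_K R)\otimes_R S\cong A\otimes_K S$ (the map $\omega_{RS}$ recalled before \eqref{cuadradoomega}) yields $A\otimes_K S\cong B\otimes_K S$, i.e.\ $\F(S)\cong\mathcal{G}(S)$. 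Either route works; I would present the first one since it keeps the argument at the level of the functor $\F$ without passing through the $\underline{A}$ description.
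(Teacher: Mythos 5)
Your proof is correct and follows exactly the route the paper intends: the corollary is stated as an immediate consequence of Proposition~\ref{extenprop} (equivalently, of Remark~\ref{nuevomac}), namely tensoring the $R$-algebra isomorphism $\F(R)\cong\mathcal{G}(R)$ with $S$ over $R$ and composing with the isomorphisms $\F(S)\cong\F(R)\otimes_R S$ and $\mathcal{G}(S)\cong\mathcal{G}(R)\otimes_R S$. Your observations that naturality is not needed here and that the two routes (via Proposition~\ref{extenprop} or via $\underline{A}$) coincide are both accurate.
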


Note that Corollary \ref{Leoquierealgo} is true for any \emph{Hopf algebra} since by definition a Hopf algebra is endowed with a homomorphism to the ground field (see \cite[Section 1.4 (page 8)]{Waterhouse}). In other words, we can verify that two extension invariant functors are isomorphic just by checking in a Hopf algebra (for instance in a group algebra). It will be considered below in subsection \ref{subseccionpathalgebra}.

Next, we recall an extension algebra functor from which we will derive some  subfunctors (derivations, centroid).

\begin{example}
\rm Let now $V$ be a vector space over a field $K$ and denote by
$\End_K(V)$ the $K$-algebra of all linear maps $V\to V$. 
If $R\in\alg_K$, we will also denote $\End_R(V_R)$ the $R$-algebra of all $R$-module endomorphisms $V_R\to V_R$. 
There is a functor $\mathfrak{G}\colon\alg_K\to\Alg_K$ such that
$\mathfrak{G}(R):=\End_R(V_R)$ and for any $\a\colon R\to R'$ (in $\alg_K$) we have $\mathfrak{G}(\a)\colon\End_R(V_R)\to\End_{R'}(V_{R'})$ given by 
$\mathfrak{G}(\a)(T)=T'$ such that $T'\colon V_{R'}\to V_{R'}$ 
is defined by $T'(v\otimes 1):=(1\otimes\a)T(v\otimes 1)$
whenever $T\in\End_R(V_R)$.  We will call $\mathfrak{G}$ the
{\em endomorphism functor}. This algebra functor is well-known and it is easy to prove that is extension invariant thanks to the natural isomorphism $\Omega \colon \End(V) \otimes R \to \End(V_R)$, such that $\Omega(T\otimes r)(v \otimes s)=T(v) \otimes rs$.
\end{example}

\begin{example}\rm
For an algebra $A$ over a unital commutative associative ring $K$, the \emph{centroid} of $A$ is the space of $K$-linear transformations $T$ on $A$ such that $T(ab)=aT(b)=T(a)b$ for all $a,b \in A$ and we will denote it by $C(A)$. Besides, the \emph{center} of $A$ is the set $Z(A):=\{a \in A \colon ax=xa \text{ for all } x \in A\}$. If $A$ is an associative, unital $K$-algebra we have that $Z(A)\cong C(A)$. So, the subfunctor $\F\colon\alg_K\to\Alg_K$ 
of the endomorphism functor $\mathfrak{G}$
such that $\F(R):=C(A_R)$ is extension invariant. If we consider the $K$-algebra $A=M_{\infty}(K)$, where $M_\infty(K)$ denotes the set of infinite square matrices  with a finite number of nonzero entries over $K$, though $A$ is not unital, the functor $\F$ is still extension invariant. Indeed, it is well known that $C(A_R)=R\cdot 1_{A_R}$ and so the isomorphism 
$\tau_R\colon C(A_R)\to C(A_K)\otimes R$ is just 
$\tau_R(r 1_{A_R})=1_{A}\otimes r$ which is natural in $R$.
\end{example}

The following example gives an algebra functor which is not extension invariant.

\begin{example} \label{ejemploderivaciones}\rm Denote by $\text{Der}$ the \emph{derivations} of the corresponding algebra, that is to say, all the linear applications $d$ satisfying $d(ab)=a d(b)+d(a) b$ for any elements $a,b$ in the algebra. Let us consider a perfect finite-dimensional nontrivial $K$-algebra $U$, 
it can be proved that $R\mapsto\text{Der}(U_R)$ is the object map of a functor $\F \colon \alg_K \to \Alg_K$ which is a subfunctor 
of the endomorphism functor $\mathfrak{G}$. 
This functor is not extension invariant. Observe that $\F(K)=\text{Der}(U_K) \cong \text{Der}(U)$. Following \cite[Theorem 1]{BM} or \cite[Remark 2.31]{BN} we have that $\text{Der}(U \otimes K[x]) \cong \text{Der}(U) \otimes K[x] \oplus C(U) \otimes \text{Der}(K[x])$. Taking into account that $\text{Der}(K[x]) \neq 0$ and $C(U) \neq 0$ we have that $\F(K[x]) \ncong \F(K)\otimes_K K[x]$. In fact, this construction gives us an example of a non extension invariant subfunctor of an extension invariant functor. 
\end{example}

\subsection{Path, Cohn path, Leavitt path, Steinberg and evolution algebra functors and others}\label{subseccionpathalgebra}

 In this subsection, we will explore some interesting examples of extension invariant functors which, in fact, are not subfunctors of endomorphims. We are talking about the path, Leavitt path, Cohn path and Steinberg algebra functors among others. 

First, a bunch of algebra functors is provided by the many different graph algebras which one can associate to a directed graph $E$. A \emph{directed graph} $E=(E^0,E^1,r,s)$ consists of two sets $E^0$ and $E^1$  and functions $r,s\colon E^1 \to E^0$. The elements of $E^0$ are called \emph{vertices} and the elements of $E^1$ \emph{edges}. We will mention that a graph $E$ is finite (respectively countable)  if $\vert E^0 \cup E^1 \vert < \infty$ where $\vert \cdot \vert$ denotes the cardinal of the corresponding set (respectively $\vert E^0 \cup E^1 \vert \leq  \aleph_0$). 
A vertex $v$ is called a \emph{regular vertex} if $s^{-1}(v)$ is a finite non-empty set. The set of regular vertices is denoted by ${\rm Reg}(E)$. A \emph{path of length $n$} in $E$ is a sequence $e_1 e_2 \ldots e_n$ of edges in $E$ such that $r(e_i)=s(e_{i+1})$ for $i \in \{1,2,\ldots,n-1\}$. If $\xi$ is a path of length $n$, then we write $|\xi|=n$.  We consider vertices in $E^0$ as paths of length zero. The set of all finite paths of length $n$ is denoted by $E^n$ and we let ${\rm Path}(E)=\cup_{n=0}^{\infty} E^n$. For instance, 
for $R \in \rng$, consider the \emph{path algebra} $RE$ as the free $R$-algebra generated by 
the sets $E^0 \cup E^1$ with relations:
\begin{itemize}
    \item[(V)] $vw=\delta_{v,w}v$ for $v,w \in E^0$,
    \item[(E1)] $s(e)e=er(e)=e$ for $e \in E^1$,
\end{itemize}
that is, $RE=\{ \sum_{i\in I} r_i \lambda_i : r_i \in R, \lambda_i \in \text{Path}(E) \hbox{ and }\vert I\vert \hbox{ finite}\}$. 
This suggests the definition of the functor $PE \colon \rng \to \Alg_{\Z}$ given by $R \mapsto RE$ and if $\alpha \colon R \to R'$ is a morphism in $\rng$, then $PE(\alpha)\left (\sum_{i \in I}r_i\lambda_{i} \right )=\sum_{i\in I} \alpha (r_i) \lambda_{i}$. It is well known that $PE$ is an extension invariant functor.

Denote by $X_E$  any subset of ${\rm Reg}(E)$ or by $X$ if  there is no confusion. The \emph{Cohn path algebra of $E$ relative to $X_E$}, denoted $C_R^{X_E}(E)$, is the free $R$-algebra generated by the sets $E^0 \cup E^1 \cup \{e^* \ | \ e \in E^1\}$ with relations (V), (E1) and:
\begin{itemize}
    \item[(E2)] $r(e)e^*=e^*s(e)=e^*$ for $e \in E^1$,
    \item[(CK1)] $e^*f = \delta_{e,f}r(e)$ for $e,f \in E^1$ and,
    \item[(XCK2)] $v = \sum_{e \in s^{-1}(v)}ee^*$ for every vertex $v \in X$.
\end{itemize}
As a consequence, we have that the \emph{Cohn path algebra} $C_R(E)$ corresponds to $C_R(E)=C_R^{\emptyset}(E)$ and the \emph{Leavitt path algebra} $L_R(E)$ to $L_R(E)=C_R^{{\rm Reg}(E)}(E)$. We have that every element of $C_R^X(E)$ can be represented as a sum of the form $\sum_{i=1}^n  r_i \alpha_i\beta_i^*$ for some $n \in \mathbb{N}$, paths $\alpha_i, \beta_i$ such that $r(\alpha_i)=r(\beta_i)$, and $r_i \in R$ for every $i=1, \ldots,n$. For more details about these graph algebras see \cite[Chapter 1]{AAS}. These algebras provide functors
$C^X(E)\colon\rng\to\Alg_\Z$ such that $R\mapsto C_R^X(E)$. In particular, we have $C^{\rm{Reg(E)}}(E)=L(E)\colon\rng\to\Alg_\Z$ such that $R\mapsto L_R(E)$. As an important fact, by \cite[Corollary 1.5.14]{AAS}, we have that $C_S^{X}(E) \cong C_R^{X}(E) \otimes_R S$ for any unital commutative ring $R$ and any unital commutative $R$-algebra $S$. It is easy to check that this isomorphism is natural, therefore the functor $C^X(E)$ is extension invariant.

At the same time, we may give another example of functors provided by Steinberg algebras. Let $G$ be a groupoid, that is, $G$ generalizes the concept of group being the binary operation only partially defined. The unit space of $G$, denoted by $G^{(0)}$, is $G^{(0)}=\{\gamma \gamma^{-1} \colon \gamma \in G\}=\{\gamma^{-1} \gamma \colon \gamma \in G\}$.
Groupoid source and range maps ${\mathfrak r},\mathfrak{s} \colon G \to G^{(0)}$ are defined by ${\mathfrak s}(\gamma)=\gamma^{-1} \gamma$ and ${\mathfrak r}(\gamma)=\gamma \gamma^{-1}$. Steinberg algebras are algebras associated to Hausdorff ample groupoids. We briefly remind their construction. First, a {\it topological groupoid} is a groupoid equipped with a topology such that composition and inversion are continuous. An open subset $B$ of a groupoid $G$ is an {\it open bisection} if ${\mathfrak r}$ and ${\mathfrak s}$ restricted to $B$ are homeomorphisms onto an open subset of $G^{(0)}$. A topological groupoid is said to be \emph{ample} if there is a basis for its topology consisting of compact open bisections. So at the end, suppose that $G$ is a Hausdorff ample groupoid and $R \in \rng$, the \emph{Steinberg algebra} associated to $G$, denoted $A_R(G)$, is the $R$-algebra of all locally constant functions $f \colon G \to R$ such that ${\rm supp} f=\{\gamma \in G \colon f(\gamma) \ne 0\}$ is compact. For $f \in A_R(G)$, we have that ${\rm supp} f$ is both compact and open. Thus we have
$$A_R(G)={\rm span}\{ 1_B \colon B \text{ is a compact open bisection}\}$$
where $1_B \colon G \to R$ is the characteristic function of $B$. Addition and scalar multiplication in $A_R(G)$ are defined pointwise and multiplication is given by convolution such that for compact open bisections $B$ and $D$ we have $1_B1_D=1_{BD}$. These algebras provide functors
$\mathfrak{A}(G)\colon\rng\to\Alg_\Z$ such that $R\mapsto A_R(
G)$. And for an homomorphism $\alpha \colon  R \to R'$ we define $\mathfrak{A}(G)(\alpha) \colon A_R(G) \to A_{R'}(G)$ as $\mathfrak{A}(G)(\alpha)\left(\sum_i r_i1_{B_i}\right) = \sum_i \alpha(r_i)1_{B_i}$. It can be easily proved that this is an extension invariant functor.

Let ${\mathcal E}$ be an \emph{evolution $K$-algebra} for a ring $K$, this is nothing but a $K$-algebra which is free as a $K$-module and has a basis $\{v_i\}_{i\in I}$ such that 
$v_iv_j=0$ when $i\ne j$.
Then, if we define as before the functor $\underline{\mathcal E}\colon\alg_{K}\to\Alg_{K}$ such that $\underline{\mathcal E}(R)={\mathcal E}_R={\mathcal E}\otimes_{K} R$, it turns out that $\underline{\mathcal E}(R)$ is an evolution algebra. In case $K=\Z$ we can consider the functor $\underline{\mathcal E}\colon\rng\to\Alg_\Z$ so that $\underline{\mathcal E}(R)={\mathcal E}\otimes_{\Z} R$. In particular for the functor $\underline{\mathcal E}\colon\alg_{K}\to\Alg_{K}$, it can be applied Remark \ref{nuevomac}.

Finally, if $G$ is a group and $K$ a ring we may consider the group ring functor $\underline{KG}\colon\alg_K\to\Alg_K$ such that 
$R\mapsto RG$. For any $\a\in\hom_{\alg_K}(R,R')$, we define 
$\underline{KG}(\a)\left( \sum_i r_i g_i\right)= \sum_i\a(r_i)g_i$
for $r_i\in R$ and $g_i\in G$. Again, Remark \ref{nuevomac} applies. 

Since any group algebra is a Hopf algebra, observe that we are in the conditions of Corollary \ref{Leoquierealgo}: we deduce that if for some group $G$ and extension invariant functors $\F$ and $\mathcal{G}$ we have $\F(KG)\cong\mathcal{G}(KG)$, then $\F(S)\cong\mathcal{G}(S)$ for any $S\in\alg_K$. 

Also, as a consequence of Corollary \ref{Leoquierealgo} and due to the fact that the Leavitt path algebra functor is extension invariant, we conclude our first isomorphism result in the following context:

\begin{corollary}\label{Leavitthopf} If $E_1$ and $E_2$ are graphs and $H$ any Hopf $K$-algebra (in particular any group $K$-algebra), then $L_H(E_1)\cong L_H(E_2)$ if and only if $L_K(E_1)\cong L_K(E_2)$.
\end{corollary}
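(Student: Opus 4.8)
The plan is to deduce Corollary \ref{Leavitthopf} directly from Corollary \ref{Leoquierealgo} together with the observation, already made in the excerpt, that the Leavitt path algebra functor $L(E)\colon\rng\to\Alg_{\Z}$ is extension invariant, hence so is its base change to $\alg_K$. First I would fix the Hopf $K$-algebra $H$ and recall that, by the very definition of a Hopf algebra, $H$ comes equipped with a counit $\varepsilon\colon H\to K$, which is in particular a morphism in $\alg_K$ (see \cite[Section 1.4]{Waterhouse}); dually, the unit map $\eta\colon K\to H$ gives a morphism in $\alg_K$ with $\varepsilon\eta=\id_K$, so $H$ sits between $K$ and itself in $\alg_K$. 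This is the only structural feature of $H$ that will be used.

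Next I would apply Corollary \ref{Leoquierealgo} to the two extension invariant functors $\F=L(E_1)$ and $\mathcal{G}=L(E_2)$, regarded as $K$-algebra functors on $\alg_K$ via the ring map $\Z\to K$ (equivalently, as $\underline{L_K(E_i)}$ in the notation of Definition \ref{defextension} and Remark \ref{nuevomac}). For the forward implication, suppose $L_H(E_1)\cong L_H(E_2)$ as $H$-algebras, i.e. $\F(H)\cong\mathcal{G}(H)$. Applying the second assertion of Corollary \ref{Leoquierealgo} to the morphism $\varepsilon\colon H\to K$ in $\alg_K$ yields $\F(K)\cong\mathcal{G}(K)$, that is $L_K(E_1)\cong L_K(E_2)$. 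For the converse, if $L_K(E_1)\cong L_K(E_2)$ then applying the first assertion of Corollary \ref{Leoquierealgo} to the unit morphism $\eta\colon K\to H$ gives $\F(H)\cong\mathcal{G}(H)$, i.e. $L_H(E_1)\cong L_H(E_2)$. The parenthetical remark about group algebras is then immediate, since the group algebra $KG$ is a Hopf $K$-algebra with counit induced by $g\mapsto 1$.

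There is essentially no obstacle here: the content has been front-loaded into Corollary \ref{Leoquierealgo} and into the discussion showing that $L(E)$ is extension invariant (via \cite[Corollary 1.5.14]{AAS}), so the proof is a one-line invocation in each direction. The only point requiring a moment's care is to make sure the functors are being considered over the right base: the natural home of $L(E)$ is $\rng=\alg_{\Z}$, so to feed it to Corollary \ref{Leoquierealgo} with ground field $K$ one restricts along $\alg_K\to\rng$ (or, equivalently, replaces $L(E)$ by $\underline{L_K(E)}$, which is extension invariant as a $K$-algebra functor by Remark \ref{nuevomac}); either way the hypotheses of Corollary \ref{Leoquierealgo} are met. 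One should also note that the isomorphisms produced are merely isomorphisms of $K$-algebras (respectively $H$-algebras), not assumed graded or diagonal-preserving, which is exactly the level of generality in which the statement is phrased.
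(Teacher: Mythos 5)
Your proof is correct and follows exactly the route the paper intends: the corollary is stated there as an immediate consequence of Corollary \ref{Leoquierealgo} (whose applicability to Hopf algebras via the counit is noted just after its statement) together with the extension invariance of the Leavitt path algebra functor. Your explicit use of the counit $\varepsilon\colon H\to K$ for the forward direction and the unit $K\to H$ for the converse is precisely the argument the paper leaves implicit.
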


For instance, if $H$ is a polynomial $K$-algebra $K[x_1,\ldots,x_n]$ or a Laurent polynomial $K$-algebra $K[x_1^{\pm},\ldots,x_n^{\pm}]$,  both of which are Hopf algebras, $L_H(E_1)\cong L_H(E_2)$ implies
$L_K(E_1)\cong L_K(E_2)$.

\section{On the Hilbert's Nullstellensatz Theorem}\label{seccionNull}

Let $\A$ be an indexing set. Consider $K[x_\a]$ the polynomial ring in the indeterminates $\{x_\a\}_{\a\in\mathcal{A}}$ with coefficients in the field $K$ and an ideal $\mathfrak{i} \triangleleft K[x_\a]$. Following \cite{L52}, a \emph{zero} of $\mathfrak{i}$ is an element $(\xi_\a)_{\alpha \in \A}$ where each $\xi_\a$ is in some extension field of $K$ such that $p(\xi_\a)=0$ for all $p\in\mathfrak{i}$. A zero of $\mathfrak{i}$ will be called \emph{algebraic} if all $\xi_\a$ lie in $K$. The set of all algebraic zeros of an
ideal $\mathfrak{i}$ will be called the \emph{variety defined by} $\mathfrak{i}$ (usually denoted $V(\mathfrak{i})$), concretely
$$V(\mathfrak{i})=\{a\in K^{\mathcal{A}}\colon p(a)=0 \text{ for any } p\in \mathfrak{i}\}.$$

The Hilbert's Nullstellensatz Theorem is in general not valid if the number of indeterminates is infinite. However, the main result of \cite{L52}, that we recall in this section for self-containedness, holds. 
\begin{theorem}\label{extension} \cite{L52} Let $K$ be an algebraically closed field and $\{x_\a\}$ a set of indeterminates indexed by $\A$. The following three statements are equivalent:
\begin{enumerate}
\item[S1.] If $\mathfrak{i}$ is an ideal of $K[x_\a]$ and $p\in K[x_\a]$ vanishes on the variety defined by $\mathfrak{i}$, then $p^m\in\mathfrak{i}$ for some integer $m$.

\item[S2.] If $\mathfrak{i}$ is an ideal of $K[x_\a]$ and $\mathfrak{i}\ne K[x_\a]$, then $\mathfrak{i}$ has an algebraic zero.

\item[S3.] A ring extension $K[\xi_\a]$ by elements $\xi_\a$ in some extension field of $K$ is a field if and only if all $\xi_\a$ lie in $K$.
\end{enumerate}

Furthermore, the three statements hold if  one of the following two conditions is satisfied:
\begin{enumerate}
\item[(i)] $\A$ is a finite set.
\item[(ii)]\label{itemtranscendente} Let $\A$ have cardinality $a$. Let the transcendence degree of $K$ over the prime field $Q$ have cardinality $b$. Then $a <b$.
\end{enumerate}
\end{theorem}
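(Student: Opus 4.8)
The plan is to first establish the equivalence of the three statements S1, S2, S3 for a fixed index set $\mathcal A$, and then to check that all of them hold under hypothesis (i) and under hypothesis (ii).

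For the equivalences I would argue as follows. The implication S1 $\Rightarrow$ S2 is immediate: if $\mathfrak i\subsetneq K[x_\a]$ then $1\notin\mathfrak i$, so by the contrapositive of S1 the constant polynomial $1$ cannot vanish on $V(\mathfrak i)$, forcing $V(\mathfrak i)\ne\emptyset$. For S2 $\Leftrightarrow$ S3 I would use the usual dictionary between points and evaluation homomorphisms: given a field $K[\xi_\a]$ sitting inside an extension of $K$, the kernel $\mathfrak m$ of $K[x_\a]\to K[\xi_\a]$, $x_\a\mapsto\xi_\a$, is a maximal ideal; an algebraic zero of $\mathfrak m$ furnished by S2 induces a $K$-algebra map $K[\xi_\a]\to K$ that is injective (the source is a field), hence each $\xi_\a$ is sent into $K$ while the map is the identity there, so $\xi_\a\in K$. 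Conversely, given a proper ideal $\mathfrak i$, enlarge it to a maximal ideal $\mathfrak m$; then $K[x_\a]/\mathfrak m=K[\bar x_\a]$ is a field extension of $K$, S3 forces $\bar x_\a\in K$ for all $\a$, and the corresponding point of $K^{\mathcal A}$ is an algebraic zero of $\mathfrak m$, hence of $\mathfrak i$. Finally S2 $\Rightarrow$ S1 is the Rabinowitsch trick: for $p\ne0$ vanishing on $V(\mathfrak i)$, adjoin one new indeterminate $t$ and consider $\mathfrak j=\mathfrak i\,K[x_\a,t]+(1-tp)$; were $\mathfrak j$ proper, an algebraic zero $((a_\a),c)$ of it would satisfy both $p(a_\a)=0$ (as $(a_\a)\in V(\mathfrak i)$) and $1-c\,p(a_\a)=0$, which is absurd; hence $1=\sum_i f_ig_i+h(1-tp)$ with $f_i\in\mathfrak i$, and substituting $t=1/p$ in the field of fractions of $K[x_\a]$ and clearing denominators yields $p^m\in\mathfrak i$ for some $m$.

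For the ``furthermore'' part, case (i) is nothing but the classical Hilbert Nullstellensatz over an algebraically closed field — equivalently Zariski's lemma, that a finitely generated $K$-algebra which is a field is an algebraic, hence trivial, extension of the algebraically closed $K$ — so S3 holds. The heart of the statement is case (ii), where $\mathcal A$ has infinite cardinality $a$, $K$ has transcendence degree $b>a$ over its prime field $Q$, and one must verify S3. So I would take a field $F=K[\xi_\a]$ inside an extension of $K$ and show $F$ is algebraic over $K$. On one hand $F$ is spanned over $K$ by the monomials in the $\xi_\a$, of which there are at most $a$ (finitely supported functions $\mathcal A\to\N$), so $\dim_K F\le a$. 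On the other hand, if some $u\in F$ were transcendental over $K$ then $K(u)\subseteq F$ and the partial fractions $\{(u-c)^{-1}:c\in K\}$ would be $K$-linearly independent, giving $\dim_K F\ge|K|\ge b>a$, a contradiction (a transcendence basis exhibits $b$ distinct elements of $K$). Hence $F/K$ is algebraic, and since $K$ is algebraically closed $F=K$, so every $\xi_\a\in K$; this is S3.

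The only subtlety is a bit of bookkeeping in the Rabinowitsch step, where S2/S3 is invoked for the index set $\mathcal A\sqcup\{*\}$ rather than $\mathcal A$: this is harmless because adjoining one element preserves both hypotheses (a finite set stays finite; an infinite set of cardinality $a<b$ keeps cardinality $a<b$), so S3 is available for $\mathcal A\sqcup\{*\}$ whenever it is for $\mathcal A$. I expect the main obstacle — and the only ingredient genuinely beyond the classical Nullstellensatz — to be the cardinality comparison in case (ii): making the hypothesis $a<b$ do its work by showing that $\dim_K F$ is simply too small to contain an element transcendental over $K$.
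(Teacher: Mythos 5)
The paper does not actually prove Theorem \ref{extension}: it is quoted from Lang \cite{L52} for self-containedness, and the only proofs the authors supply in this section are those of Lemmas \ref{Leo} and \ref{Leo2}, which convert hypothesis \textit{(ii)} into the cardinality condition \textit{(ii)'}. So there is no in-paper argument to compare yours against; measured on its own terms and against the cited source, your proof is correct and complete. The cycle S1 $\Rightarrow$ S2 $\Leftrightarrow$ S3 $\Rightarrow$ S1 is the standard one, and you are right to flag the Rabinowitsch step: it invokes S2 for $\mathcal A\sqcup\{*\}$ rather than $\mathcal A$, which is legitimate because for infinite $\mathcal A$ the two polynomial rings are isomorphic (the index sets are equipotent), while for finite $\mathcal A$ all three statements hold anyway by the classical Nullstellensatz. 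The genuinely substantive point, case \textit{(ii)}, you settle by the dimension count $\dim_K F\le a < b\le |K|\le \dim_K F$, using the $K$-linear independence of $\{(u-c)^{-1}\colon c\in K\}$ for $u$ transcendental over $K$; this is correct, and it is worth observing that the argument only uses $|K|>a$, i.e.\ it proves the theorem directly under the Kaplansky-style condition \textit{(ii)'} of \eqref{Kaplansky} — exactly the strengthening the paper obtains separately through Lemmas \ref{Leo} and \ref{Leo2}. In that sense your route absorbs the content of those two lemmas: the transcendence-degree hypothesis enters only through the trivial inequality $|K|\ge b$.
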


As pointed out by S. Lang in his paper \cite{L52},  Professor I. Kaplansky  informed him that he had obtained the above theorem independently.
It is also mentioned in \cite{L52} that the two conditions {\it(i)} and {\it(ii)} can be replaced by the single condition that the cardinality of the field $K$ itself should be greater than the cardinality of $\A$. We include a proof for self-containedness.
\begin{lemma}\label{Leo}
If $Q$ is a finite field or the field of rationals and $\{x_{\a}\}_{\a \in \A}$ an infinite set of variables, then the extension field $Q(x_{\a})$ of $Q$ containing all the indeterminates $x_{\a}$ has cardinal $\vert \A \vert$. As a consequence, if $\{b_{i}\}_{i \in I}$ is an infinite transcendence basis of an extension field $K\supset Q$, the cardinals of the fields $Q(b_{i})$ and $K$ are also $\vert I \vert$. In particular, if a field $K$ has transcendence degree over its prime field greater than a given infinite cardinal $b$, then $\vert K\vert>b$ too.
\end{lemma}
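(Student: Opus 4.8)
The plan is to reduce the whole statement to a cardinality count on a polynomial ring, so I would begin there. Since $\A$ is infinite, the collection of finite subsets of $\A$ has cardinality $\vert\A\vert$, and for each finite $F\subseteq\A$ the monomials in the variables $\{x_\a\}_{\a\in F}$ are indexed by $\N^{\vert F\vert}$, a countable set; hence the set $\mathcal{M}$ of all monomials in the $x_\a$ has cardinality $\vert\A\vert\cdot\aleph_0=\vert\A\vert$. A polynomial in $Q[x_\a]$ is a finitely supported function $\mathcal{M}\to Q$: there are $\vert\A\vert$ finite subsets of $\mathcal{M}$, and since $Q$ is finite or countable there are at most $\aleph_0$ functions from each of them into $Q$, so $\vert Q[x_\a]\vert\le\vert\A\vert\cdot\aleph_0=\vert\A\vert$. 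As $\{x_\a\}_{\a\in\A}$ already sits inside $Q[x_\a]$, this gives $\vert Q[x_\a]\vert=\vert\A\vert$. Passing to the field of fractions, $Q(x_\a)$ is a quotient of a subset of $Q[x_\a]\times(Q[x_\a]\setminus\{0\})$, so $\vert Q(x_\a)\vert\le\vert\A\vert^2=\vert\A\vert$ (using that $\A$ is infinite), while $\vert Q(x_\a)\vert\ge\vert Q[x_\a]\vert=\vert\A\vert$; hence $\vert Q(x_\a)\vert=\vert\A\vert$, which is the first assertion.

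For the consequence, let $\{b_i\}_{i\in I}$ be an infinite transcendence basis of $K$ over $Q$. Since the $b_i$ are algebraically independent, the subfield $Q(b_i)\subseteq K$ they generate is $Q$-isomorphic to the field of rational functions in $\vert I\vert$ indeterminates over $Q$, so the first part already yields $\vert Q(b_i)\vert=\vert I\vert$. Now $K$ is an algebraic extension of the infinite field $F:=Q(b_i)$, and here I would invoke the elementary fact that an algebraic extension $L$ of an infinite field $F$ satisfies $\vert L\vert=\vert F\vert$: each element of $L$ is a root of some nonzero polynomial in $F[t]$, there are $\vert F\vert$ of those, and each has only finitely many roots, so $\vert L\vert\le\vert F\vert\cdot\aleph_0=\vert F\vert\le\vert L\vert$. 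Taking $L=K$ gives $\vert K\vert=\vert I\vert$.

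Finally, if $K$ has transcendence degree over its prime field $Q$ strictly greater than a prescribed infinite cardinal $b$ --- recall $Q$ is either a finite field or $\Q$, so everything above applies --- then the transcendence degree $\vert I\vert$ exceeds $b$ and is in particular infinite, whence $\vert K\vert=\vert I\vert>b$ by the previous paragraph.

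The bookkeeping here is routine, and I do not expect any genuine obstacle; the only thing that needs attention is checking at each step that the cardinals in play are infinite, so that the absorption identities $\kappa\cdot\aleph_0=\kappa$ and $\kappa^2=\kappa$ may be used, and handling the two possibilities for the prime field $Q$ uniformly by noting that both are countable.
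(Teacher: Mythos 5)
Your proposal is correct and follows essentially the same route as the paper's proof: establish $\vert Q[x_\a]\vert=\vert Q(x_\a)\vert=\vert\A\vert$ by a monomial/fraction count, then use that an algebraic extension of an infinite field has the same cardinality to conclude $\vert K\vert=\vert Q(b_i)\vert=\vert I\vert$, and deduce the final claim. The only difference is that you spell out the cardinal arithmetic that the paper leaves implicit.
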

\begin{proof}
Since $\{x_{\a}\}_{\a \in \A}\subseteq Q(x_{\a})$ we have $\vert \A\vert\le \vert Q(x_{\a})\vert$. 
The polynomial ring in all the indeterminates $Q[x_{\a}]$ has cardinal $\vert \A\vert$. Furthermore, the field of rational functions $Q(x_{\a})$ has the same
cardinal $\vert \A\vert$. Note that if we have an extension of fields such as the smallest field is infinite and the largest one is algebraic over the first, then it is well known that both fields have the same cardinal. Consequently, since $K$ is algebraic over $Q(b_{i})$ (and this is infinite) we have
$\vert K\vert=\vert Q(b_{i})\vert=\vert I\vert$.
\end{proof}

\begin{remark}\rm\label{addon}
Consider a field $Q$ with $\vert Q \vert=\aleph_0$. If $\{b_1,\ldots,b_n\}$ is a finite transcendence basis of the extension $K\supset Q$, then the cardinal of $Q(b_1,\ldots,b_n)$ is $\aleph_0$.
\end{remark}

Next, we have a converse for the Lemma \ref{Leo}:

\begin{lemma}\label{Leo2}
Let $K$ be a field and $\{x_\a\}_{\a\in\A}$ an infinite set of indeterminates.
If $\vert K\vert>\vert\A\vert$, then the  transcendence degree of $K$ over its prime field is greater than $\vert\A\vert$.
\end{lemma}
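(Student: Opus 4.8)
The plan is to argue by contradiction. Suppose the transcendence degree $b'$ of $K$ over its prime field $Q$ satisfies $b'\le\vert\A\vert$; I will deduce $\vert K\vert\le\vert\A\vert$, contrary to the hypothesis. Fix a transcendence basis $B$ of $K$ over $Q$, so $\vert B\vert=b'$, and recall that $Q$ is countable (being $\Q$ or a finite prime field $\mathbb{F}_p$). The argument then splits according to whether $B$ is finite or infinite.

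If $B$ is finite (possibly empty), then $Q(B)$ is a countable field: when $B=\emptyset$ this is clear, and when $B=\{b_1,\dots,b_n\}$ the field $Q(b_1,\dots,b_n)$ is the field of fractions of the countable ring $Q[b_1,\dots,b_n]$ (cf. Remark \ref{addon}). Since $K$ is algebraic over $Q(B)$, the cardinal-arithmetic fact recalled inside the proof of Lemma \ref{Leo} — an algebraic extension of an infinite field has the same cardinal, and any field algebraic over the countable $Q$ is itself countable — gives $\vert K\vert\le\aleph_0$. As $\A$ is infinite, $\aleph_0\le\vert\A\vert$, so $\vert K\vert\le\vert\A\vert$, a contradiction.

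If $B$ is infinite, then Lemma \ref{Leo} applies directly with $I=B$: the cardinals of $Q(B)$ and of $K$ both equal $\vert B\vert=b'$. Combined with the assumption $b'\le\vert\A\vert$, this again yields $\vert K\vert\le\vert\A\vert$, a contradiction. Hence $b'>\vert\A\vert$, as claimed. This argument needs no genuinely new ingredient — it is essentially a repackaging of Lemma \ref{Leo} — and the only point to be slightly careful about is isolating the finite-transcendence-degree case, where Lemma \ref{Leo} does not apply and one must instead invoke countability of $K$ directly.
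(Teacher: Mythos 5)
Your proof is correct and follows essentially the same route as the paper's: both split on whether the transcendence degree is finite or infinite, dispose of the finite case by noting $K$ would then be countable (hence of cardinal at most $\vert\A\vert$), and settle the infinite case by invoking Lemma \ref{Leo} to get $\vert K\vert=\vert Q(B)\vert=b'$. The only difference is purely organizational — you phrase the whole argument as a single contradiction, whereas the paper first derives $\vert K\vert\le\aleph_0\vert Q(b_i)\vert$ and then rules out finite $b$ — so there is nothing substantive to add.
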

\begin{proof} Assume $\vert K\vert>\vert\A\vert$ and $\A$ is not finite.
Let $\{b_i\}_{i\in I}$ be a transcendence basis of $K$ over $Q$ and $b$ the transcendence degree of $K$ over $Q$ (so $b=\vert\{b_i\}_{i \in I}\vert$).
Since $K$ is algebraic over $Q(b_i)$ we have 
\begin{equation}\label{Romeo}
\vert K\vert\le \aleph_0\vert Q(b_i)\vert
\end{equation}
This last assertion is true because we are able to construct an injective application between the set of equivalence classes of elements of $K$ with the same minimal polynomial and the set of polynomials in $Q(b_i)$.  
If $b$ is infinite
 we apply Lemma \ref{Leo}. From \eqref{Romeo} we obtain
$\vert K\vert\le b$. Since $\vert\A\vert<\vert K\vert$ we get the result.

Next, we prove that $b$ is necessarily infinite. If $b$ is finite, $\vert Q(b_i)\vert=\vert Q\vert=\aleph_0$ (Remark \ref{addon}) but $\vert K\vert>\aleph_0$ by hypothesis. 
Then equation \eqref{Romeo} gives $\aleph_0<\vert K\vert\le\aleph_0^2=\aleph_0$ which is a contradiction. 
\end{proof}

In view of the results of the previous Lemmas \ref{Leo} and \ref{Leo2} we can replace item {\it(ii)} of Theorem \ref{extension} for this another one:
\begin{equation}\label{Kaplansky}
\textit{(ii)'} \textit{ Let $\A$ have cardinal $a$. Let $b$ be the cardinal of the field $K$. Then $a < b$.}
\end{equation}
\section{The finite dimensional case}\label{fdc}

In this section $A$ will denote an algebra (associative or not) over a field $K$. Let $\Lambda$ be a finite indexing set. For a basis $\mathcal B= \{e_i\}_{i\in \Lambda}$ of $A$ (as a $K$-vector space), the \emph{structure constants} $\{\gamma_{ij}^k\}_{i, j, k \in \Lambda}$ relative to the basis $\mathcal B$ are those elements  $\gamma_{ij}^k\in K$ such that $e_ie_j=\sum_{k\in \Lambda}\gamma_{ij}^ke_k$ for every $i, j\in \Lambda$. For every pair $(i, j)\in \Lambda \times \Lambda$, denote the set $\Lambda_{ij}:= \{k\in \Lambda \ \vert \ \gamma_{ij}^k \neq 0\}$
and by $Q$ the prime field contained in $K$. Assume that the structure constants satisfy $\gamma_{ij}^k\in Q$. 
Define $\oplus_{i\in\Lambda} Qe_i$ and more generally for any associative, commutative an unital $Q$-algebra $R$ define $\oplus_{i\in\Lambda} Re_i$,
which is an $R$-algebra with the same structure constants (embedding $Q$ in $R$). Note that $\oplus_{i\in\Lambda} Re_i\cong (\oplus_{i\in\Lambda} Qe_i)\otimes_Q R$ as $R$-algebras. Furthermore, $\oplus_{i\in\Lambda} Ke_i=A$ and for any field extension $F\supset K$ we have $\oplus_{i\in\Lambda} Fe_i\cong A\otimes_K F = A_F$ as $F$-algebras. This is the usual scalar extension.

Note that the isomorphism question for finite-dimensional algebras can be translated into a problem of existence of zeros for suitable systems of algebraic equations. This is formulated in the next proposition.

\begin{proposition}\label{tecnicoUNO} 
Let $A$ and $B$ be finite dimensional 
$K$-algebras with bases $\mathcal B_A= \{e_i\}_{i\in \Lambda}$ and $\mathcal B_B=\{u_i\}_{i\in \Lambda}$, respectively. Let $\{\gamma_{ij}^k\}_{i, j ,k, \in \Lambda}$ and  $\{\tau_{ij}^k\}_{i, j, k \in \Lambda}$ be the structure constants of $A$ and $B$ relative to the bases $\mathcal B_A$ and $\mathcal B_B$, respectively. 

Then $A$ and $B$ are isomorphic $K$-algebras if and only if there exist 
$\{t_i^j\}_{i, j \in \Lambda}\cup \{s_i^j\}_{i, j \in \Lambda}\subseteq K$,  such that 
the following equations are satisfied:
\begin{equation}\label{ecuaciones1}
\begin{aligned}
\sum\limits_{k\in \Lambda}\gamma_{ij}^k t_k^n &  =   \sum\limits_{l, m\in \Lambda} \tau_{lm}^nt_i^lt_j^m
 & \text{for every } & i, j, n \in \Lambda,\\
 \sum\limits_{j\in \Lambda}t_i^js_j^k & =  \delta_{ik} &  \text{for every } &  i, k \in \Lambda.
\end{aligned}
\end{equation}
\end{proposition}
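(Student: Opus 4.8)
The plan is to unwind the definition of a $K$-algebra isomorphism $f\colon A\to B$ in terms of the given bases and show it is equivalent to a solution of the stated system. First I would set up the dictionary: given a $K$-linear map $f\colon A\to B$, it is determined by the images $f(e_i)=\sum_{j\in\Lambda}t_i^j u_j$, so the matrix $(t_i^j)$ is exactly the matrix of $f$ in the chosen bases. Likewise a candidate inverse $g\colon B\to A$ is given by $g(u_j)=\sum_{k\in\Lambda}s_j^k e_k$ with matrix $(s_j^k)$. Then $f$ and $g$ are mutually inverse linear maps precisely when the two matrices multiply to the identity, and since $\Lambda$ is finite one matrix equation forces the other; this is exactly the second family of equations $\sum_{j}t_i^j s_j^k=\delta_{ik}$.

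Next I would impose that $f$ is a homomorphism, i.e. $f(e_ie_j)=f(e_i)f(e_j)$ for all $i,j\in\Lambda$, which suffices by bilinearity. Expanding the left side with the structure constants of $A$ gives $f\!\left(\sum_{k}\gamma_{ij}^k e_k\right)=\sum_{k,n}\gamma_{ij}^k t_k^n u_n$. Expanding the right side gives $\left(\sum_{l}t_i^l u_l\right)\!\left(\sum_{m}t_j^m u_m\right)=\sum_{l,m,n}t_i^l t_j^m \tau_{lm}^n u_n$. Comparing coefficients of $u_n$ for each $n\in\Lambda$ yields precisely the first family of equations in \eqref{ecuaciones1}. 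This establishes the forward direction: an isomorphism $A\cong B$ produces scalars $t_i^j,s_i^j\in K$ satisfying the system.

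For the converse, I would argue that a solution $\{t_i^j\},\{s_i^j\}\subseteq K$ of \eqref{ecuaciones1} defines $f\colon A\to B$ and $g\colon B\to A$ by the formulas above, extended $K$-linearly. The second family of equations says $g\circ f=\id_A$; since $A$ and $B$ have the same finite dimension $|\Lambda|$, a left inverse between finite-dimensional vector spaces of equal dimension is a two-sided inverse, so $f$ is bijective (alternatively, $(s_i^j)$ being a left inverse of $(t_i^j)$ forces it to be the inverse matrix, giving $f\circ g=\id_B$ as well). The first family of equations says $f(e_ie_j)=f(e_i)f(e_j)$ on basis elements, and bilinearity of multiplication in $A$ and $B$ propagates this to all of $A$, so $f$ is an algebra homomorphism; being bijective, it is an isomorphism.

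There is no serious obstacle here — the statement is essentially a translation exercise. The only point that deserves a word of care is the passage from the single matrix identity $\sum_j t_i^j s_j^k=\delta_{ik}$ to genuine invertibility of $f$: this uses finite-dimensionality in an essential way (it is exactly what will fail, or need replacing, in the infinite-dimensional treatment of Section~\ref{idc}), and I would state it explicitly rather than leave it implicit. Everything else is a matter of expanding products using the structure constants and matching coefficients against the basis $\{u_n\}_{n\in\Lambda}$.
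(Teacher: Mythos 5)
Your proof is correct and follows essentially the same route as the paper: expand $f(e_ie_j)=f(e_i)f(e_j)$ in the bases to get the first family of equations, and expand $f^{-1}f(e_i)=e_i$ to get the second. The only difference is that you spell out the converse (in particular, that a one-sided matrix inverse is two-sided in finite dimensions), which the paper leaves as ``follows immediately''; your explicit remark that this is where finite-dimensionality enters is accurate and consistent with the more careful treatment the paper later gives in the infinite-dimensional case.
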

\begin{proof}
Assume first that $f\colon A\to B$ is an isomorphism of $K$-algebras.
Let $t_i^j\in K$ be such that $f(e_i)= \sum_{j\in \Lambda}t_i^j u_j$. The hypothesis of being $f$ a homomorphism of algebras means $f(e_ie_j)=f(e_i)f(e_j)$ for every $i, j \in \Lambda$. Then

$$
\begin{aligned}
f\left(\sum\limits_{k\in \Lambda}\gamma_{ij}^ke_k\right) & = \sum\limits_{l\in \Lambda}t_i^lu_l
 \sum\limits_{m\in \Lambda}t_j^mu_m, \\
 \sum\limits_{k\in \Lambda}\gamma_{ij}^kf(e_k) & = \sum\limits_{l, m\in \Lambda}t_i^lt_j^mu_lu_m,\\
 \sum\limits_{k\in \Lambda}\gamma_{ij}^k   \sum\limits_{n\in \Lambda}t_k^n u_n 
 & = \sum\limits_{l, m\in \Lambda,}t_i^lt_j^m\sum_{n\in \Lambda}\tau_{lm}^n u_n.
 \end{aligned}
 $$
 
 Then, taking into account the linear independence of the $u_n$'s we have, for any $i, j, n,$
 
 $$ 
  \sum\limits_{k\in \Lambda}\gamma_{ij}^k t_k^n = \sum\limits_{l, m\in \Lambda}t_i^lt_j^m \tau_{lm}^n.
  $$
 
 The following step is to impose that $f$ is bijective. If this is the case,  $f^{-1}$ denotes the inverse of $f$ and
 $f^{-1}(u_i)= \sum\limits_{j\in \Lambda} s_i^je_j$, where $s_i^j\in K$. Then 
 $$e_i=f^{-1}f(e_i) = f^{-1}\left(\sum\limits_{j\in \Lambda}t_i^j u_j\right)= \sum\limits_{j, k\in \Lambda}t_i^js_j^ke_k,$$
 hence
 $$ 
 \sum\limits_{j\in \Lambda}t_i^js_j^k = \delta_{ik}.
 $$
 
 The converse follows immediately. 
\end{proof}

Let $A$ and $B$ be $K$-algebras having the same finite dimension as $K$-vector spaces, and let  $\{\gamma_{ij}^k\}_{i, j, k, \in \Lambda}$ and  $\{\tau_{ij}^k\}_{i, j, k \in \Lambda}$  be the structure constants of $A$ and $B$, respectively, relative to certain bases indexed in a set  $\Lambda$.
Consider $X=\{T_i^j\}_{i, j \in \Lambda}\cup \{S_i^j\}_{i, j \in \Lambda}$ a set of indeterminates and let  $K[X]$ be the $K$-algebra of commuting polynomials in the indeterminates of $X$. For every $i, j, n \in \Lambda$, define
$$p_{ijn}:= \sum\limits_{k\in \Lambda}\gamma_{ij}^kT_k^n - \sum\limits_{l,m \in \Lambda } \tau_{lm}^nT_i^lT_j^m$$
and for every $i, k \in \Lambda$, 
$$q_{ik}:=\sum_{j \in \Lambda}T_i^jS_j^k-\delta_{ik}.$$

\begin{corollary}\label{correspondencia}
In the previous conditions, let $\mathfrak{i}_K$ be the ideal of $K[X]$ generated by the polynomials of the set $P_K:=\{p_{ijn}\}_{i, j, n \in \Lambda}\cup \{q_{ik}\}_{i, k \in \Lambda}$. Then the set of isomorphisms $f:A \to B$ is in one-to-one correspondence with the points of the algebraic variety $V(\mathfrak{i}_K)$, that is, the common zeros of the polynomials  in $P_K$.
\end{corollary}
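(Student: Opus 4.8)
The plan is to make explicit the bijection already latent in Proposition~\ref{tecnicoUNO}: I will write down two maps between the set $\isom(A,B)$ of $K$-algebra isomorphisms $A\to B$ and $V(\mathfrak{i}_K)\subseteq K^X$ and check that they are mutually inverse. Here a point of $V(\mathfrak{i}_K)$ is a family $a=\bigl((t_i^j)_{i,j\in\Lambda},(s_i^j)_{i,j\in\Lambda}\bigr)$ annihilating every $p_{ijn}$ and every $q_{ik}$.

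First I would define $\Phi\colon\isom(A,B)\to V(\mathfrak{i}_K)$ by reading off coordinates with respect to the fixed bases $\mathcal B_A,\mathcal B_B$: given $f$, let the $t_i^j$ be determined by $f(e_i)=\sum_{j\in\Lambda}t_i^j u_j$ and the $s_i^j$ by $f^{-1}(u_i)=\sum_{j\in\Lambda}s_i^j e_j$. The computation already carried out in the proof of Proposition~\ref{tecnicoUNO} shows that $f(e_ie_j)=f(e_i)f(e_j)$ translates precisely into $p_{ijn}(a)=0$ for all $i,j,n$, while $f^{-1}f=\id_A$ translates into $q_{ik}(a)=0$ for all $i,k$; since these coordinates are uniquely determined by $f$, the assignment $\Phi$ is well defined and lands in $V(\mathfrak{i}_K)$.

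Next I would define $\Psi\colon V(\mathfrak{i}_K)\to\isom(A,B)$ by sending $a$ to the $K$-linear map $f\colon A\to B$ with $f(e_i)=\sum_{j\in\Lambda}t_i^j u_j$. The relations $p_{ijn}(a)=0$ force $f$ to respect products of basis elements, hence $f$ is an algebra homomorphism; introducing the $K$-linear map $g\colon B\to A$ with $g(u_i)=\sum_{j\in\Lambda}s_i^j e_j$, the relations $q_{ik}(a)=0$ give $g\circ f=\id_A$. The one genuine subtlety, and the step I expect to be the only non-formal one, is that the ideal $\mathfrak{i}_K$ only encodes a one-sided inverse: I must invoke $\dim_K A=\dim_K B<\infty$ to conclude from $g\circ f=\id_A$ that $f$ is injective, hence bijective, and therefore that $g=f^{-1}$, which makes $f$ a bona fide isomorphism and $\Psi(a)$ well defined.

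Finally I would verify $\Psi\circ\Phi=\id$ and $\Phi\circ\Psi=\id$. The first holds because $\Psi(\Phi(f))$ and $f$ agree on the basis $\mathcal B_A$, hence coincide as linear maps. For the second, $\Psi(a)$ has $T$-coordinates $t_i^j$ by construction, and since its inverse is the map $g$ with $g(u_i)=\sum_j s_i^j e_j$ by the argument above, its $S$-coordinates are exactly the $s_i^j$; thus $\Phi(\Psi(a))=a$. This yields the asserted one-to-one correspondence between $\isom(A,B)$ and $V(\mathfrak{i}_K)$, the only ingredient beyond bookkeeping being the finite-dimensional rank argument promoting a left inverse to a two-sided one.
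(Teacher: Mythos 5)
Your proof is correct and follows essentially the same route as the paper, which states this corollary without a separate proof as an immediate repackaging of Proposition~\ref{tecnicoUNO} (and spells out the analogous bijection $\varphi,\psi$ only later, in the infinite-dimensional Proposition~\ref{correspondenciaInf}). Your explicit observation that finite-dimensionality forces the one-sided inverse encoded by the $q_{ik}$ to be the unique two-sided inverse---which is what makes the correspondence injective on the $S$-coordinates---is exactly the detail the paper leaves implicit.
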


Now that we have translated the isomorphism question of finite-dimensional algebras to a problem of varieties, we can exploit the Hilbert's Nullstellensatz Theorem in the following result.

\begin{theorem}\label{VueltaAlCole}
Let $K\subseteq F$ be a field extension, with $K$ algebraically closed. Consider $A$ and $B$ two $K$-algebras having finite dimension as $K$-vector spaces. Then $A_F$ and $B_F$ are isomorphic as $F$-algebras if and only if $A$ and $B$ are isomorphic as $K$-algebras.
\end{theorem}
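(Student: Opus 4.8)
The plan is to reduce the equivalence, in both directions, to the dictionary of Corollary \ref{correspondencia} between algebra isomorphisms and points of an affine variety, and then to feed this into the classical Hilbert's Nullstellensatz, i.e.\ Theorem \ref{extension} under condition (i).

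If $\dim_K A\ne\dim_K B$ then neither $A\cong B$ nor $A_F\cong B_F$ can hold, since dimension is preserved under scalar extension, so the statement is trivially true; hence we may assume $\dim_K A=\dim_K B$, with bases $\mathcal B_A=\{e_i\}_{i\in\Lambda}$ of $A$ and $\mathcal B_B=\{u_i\}_{i\in\Lambda}$ of $B$ indexed by a common finite set $\Lambda$, and structure constants $\{\gamma_{ij}^k\},\{\tau_{ij}^k\}\subseteq K$. First I would note that $\{e_i\otimes 1\}_{i\in\Lambda}$ and $\{u_i\otimes 1\}_{i\in\Lambda}$ are $F$-bases of $A_F$ and $B_F$ with exactly the same structure constants (this is the usual scalar extension recalled before Proposition \ref{tecnicoUNO}). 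Hence, for the finite set of indeterminates $X=\{T_i^j\}_{i,j\in\Lambda}\cup\{S_i^j\}_{i,j\in\Lambda}$, the polynomials $p_{ijn}$ and $q_{ik}$ built from $A,B$ coincide with those built from $A_F,B_F$; call this common set $P$, and let $\mathfrak i_K\triangleleft K[X]$ and $\mathfrak i_F\triangleleft F[X]$ be the ideals it generates, so that $\mathfrak i_F=\mathfrak i_K\,F[X]$. By Corollary \ref{correspondencia}, applied over $K$ and over $F$, the $K$-algebra isomorphisms $A\to B$ are in bijection with $V(\mathfrak i_K)\subseteq K^{X}$, and the $F$-algebra isomorphisms $A_F\to B_F$ with $V(\mathfrak i_F)\subseteq F^{X}$.

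The easy implication is that $A\cong B$ over $K$ forces $A_F\cong B_F$ over $F$: a common zero of $P$ lying in $K$ is \emph{a fortiori} a common zero lying in $F$ (equivalently, tensor an isomorphism with $F$). For the substantial \lq\lq descending\rq\rq\ implication, assume $A_F\cong B_F$. Then $V(\mathfrak i_F)\ne\emptyset$ by Corollary \ref{correspondencia}, hence $\mathfrak i_F\ne F[X]$, since the constant polynomial $1$ has no zeros. This forces $\mathfrak i_K\ne K[X]$: if $1\in\mathfrak i_K$, then writing $1$ as a $K[X]$-combination of elements of $P$ would exhibit $1\in\mathfrak i_F$, a contradiction. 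Now $K$ is algebraically closed and $X$ is finite, so Theorem \ref{extension} applies under condition (i); by statement S2 the proper ideal $\mathfrak i_K$ has an algebraic zero, that is, $V(\mathfrak i_K)\ne\emptyset$. By Corollary \ref{correspondencia} such an algebraic zero is exactly an isomorphism $A\to B$ of $K$-algebras, which finishes the proof.

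I do not expect a genuine obstacle here: the whole weight rests on the finite-variable Nullstellensatz, and the only point that needs care is the compatibility of the structure constants under scalar extension, which is what guarantees that $V(\mathfrak i_K)$ and $V(\mathfrak i_F)$ are cut out by the same polynomials and that $\mathfrak i_F$ is the extension of $\mathfrak i_K$ — so that \lq\lq$\mathfrak i_F$ proper\rq\rq\ descends to \lq\lq$\mathfrak i_K$ proper\rq\rq. In the infinite-dimensional analogue the set $X$ is no longer finite, and it is precisely at this step that one is forced to assume $\vert K\vert>\dim_K A$ in order to apply Theorem \ref{extension} through its variant \eqref{Kaplansky} (namely $\vert K\vert>\vert X\vert$) rather than through condition (i).
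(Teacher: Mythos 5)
Your proposal is correct and follows essentially the same route as the paper: translate isomorphisms into points of the varieties $V(\mathfrak i_K)$ and $V(\mathfrak i_F)$ via Corollary \ref{correspondencia}, observe that $1\in\mathfrak i_K$ would force $1\in\mathfrak i_F$, and invoke the finite-variable Nullstellensatz (Theorem \ref{extension}, condition (i)) to produce an algebraic zero of the proper ideal $\mathfrak i_K$. Your extra remarks (reducing to $\dim_K A=\dim_K B$ and checking that the structure constants, hence the defining polynomials, are unchanged under scalar extension) only make explicit details the paper leaves implicit.
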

\begin{proof}
Let $X=\{T_i^j\}_{i, j \in \Lambda}\cup \{S_i^j\}_{i, j \in \Lambda}$ be a set of commuting indeterminates, and consider the polynomial algebras $K[X]$ and $F[X]$. Take, as in Corollary \ref{correspondencia}, the ideals $\mathfrak{i}_K$ and $\mathfrak{i}_F$ of $K[X]$ and $F[X]$ generated by the polynomials in $P_K$ and $P_F$ respectively. The $K$-algebra monomorphism $i\colon K[X]\to F[X]$ maps $\mathfrak{i}_K$ into $\mathfrak{i}_F$ so that if $1\in \mathfrak{i}_K$, then $1\in \mathfrak{i}_F$. If $A_F$ and $B_F$ are isomorphic as $F$-algebras, then $V(\mathfrak{i}_F)\neq \emptyset$ by Corollary \ref{correspondencia}, which implies $1 \notin \mathfrak{i}_K$. Therefore $\mathfrak{i}_K\neq K[X]$ and by the Hilbert's Nullstellensatz Theorem we get $V(\mathfrak{i}_K)\neq \emptyset$. Again, by Corollary \ref{correspondencia}, this implies that $A$ and $B$ are isomorphic as $K$-algebras.
\end{proof}

\begin{remark}\rm
The hypothesis of being algebraically closed field cannot be eliminated. For example, consider the quaternion division algebra $\mathbb H$ and $M_2(\mathbb R)$, which are non isomorphic $\mathbb R$-algebras. However, their complexifications are isomorphic as $\mathbb C$-algebras.
\end{remark}
Firstly, as an immediate consequence of the previous theorem, we contemplate the case when we have two fields containing another algebraically closed.

\begin{corollary}\label{mismaCaracteristica} Let $F_1$ and $F_2$ be two fields containing an algebraically closed field $K$. Consider $A$ and $B$ be two $K$-algebras with finite dimension. Then $A_{F_1}$ and $B_{F_1}$ are isomorphic as $F_1$-algebras if and only if $A_{F_2}$ and $B_{F_2}$ are isomorphic as $F_2$-algebras.
\end{corollary}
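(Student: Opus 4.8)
The plan is to reduce Corollary \ref{mismaCaracteristica} to Theorem \ref{VueltaAlCole} by passing through the base field $K$. Since $K$ is algebraically closed and $A, B$ are finite dimensional over $K$, Theorem \ref{VueltaAlCole} is directly applicable to the extension $K \subseteq F_1$ and equally to $K \subseteq F_2$. The key observation is that the statement ``$A$ and $B$ are isomorphic as $K$-algebras'' is a condition that does not mention $F_1$ or $F_2$ at all, so it can serve as a pivot.

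Concretely, first I would apply Theorem \ref{VueltaAlCole} to the field extension $K \subseteq F_1$: it gives that $A_{F_1} \cong B_{F_1}$ as $F_1$-algebras if and only if $A \cong B$ as $K$-algebras. Second, I would apply Theorem \ref{VueltaAlCole} again, this time to the field extension $K \subseteq F_2$: it gives that $A_{F_2} \cong B_{F_2}$ as $F_2$-algebras if and only if $A \cong B$ as $K$-algebras. Chaining these two biconditionals through the common middle term $A \cong B$, we conclude that $A_{F_1} \cong B_{F_1}$ as $F_1$-algebras if and only if $A_{F_2} \cong B_{F_2}$ as $F_2$-algebras, which is exactly the claim.

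There is essentially no obstacle here: the only things to check are that the hypotheses of Theorem \ref{VueltaAlCole} are met in both invocations, namely that $K$ is algebraically closed (given) and that $A$ and $B$ have finite dimension as $K$-vector spaces (given), and that the dimension and structure-constant data used do not depend on which superfield we extend to --- but that is automatic since $A$ and $B$ are fixed $K$-algebras and the scalar extension $A_{F_i} = A \otimes_K F_i$ is the usual one. Hence the proof is a two-line application of the theorem to each extension and a transitivity step.

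\begin{proof}
By Theorem \ref{VueltaAlCole} applied to the extension $K\subseteq F_1$, the $F_1$-algebras $A_{F_1}$ and $B_{F_1}$ are isomorphic if and only if $A$ and $B$ are isomorphic as $K$-algebras. Likewise, by Theorem \ref{VueltaAlCole} applied to the extension $K\subseteq F_2$, the $F_2$-algebras $A_{F_2}$ and $B_{F_2}$ are isomorphic if and only if $A$ and $B$ are isomorphic as $K$-algebras. Combining both equivalences through the common condition $A\cong B$ as $K$-algebras yields the result.
\end{proof}
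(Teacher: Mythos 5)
Your proposal is correct and is essentially identical to the paper's own proof: both apply Theorem \ref{VueltaAlCole} to the extensions $K\subseteq F_1$ and $K\subseteq F_2$ and chain the two equivalences through the pivot condition $A\cong B$ as $K$-algebras.
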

\begin{proof} Let us consider $A_{F_1}$ and $B_{F_1}$ as $F_1$-algebras. By Theorem \ref{VueltaAlCole} these two algebras are isomorphic if and only if $A$ and $B$ are isomorphic as $K$-algebras. Again, by Theorem \ref{VueltaAlCole} $A$ and $B$ are isomorphic if and only if $A_{F_2}$ and $B_{F_2}$ are isomorphic as $F_2$-algebras. By transitivity, we have our claim.
\end{proof}

Secondly, we may have the situation when two algebraically closed fields contains another one which is not necessarily algebraically closed.

\begin{corollary} \label{dosalgcerrados}Let $F_1$ and $F_2$ be two algebraically closed fields containing a field $K$. Consider $A$ and $B$ two $K$-algebras with finite dimension. Then $A_{F_1}$ and $B_{F_1}$ are isomorphic as $F_1$-algebras if and only if $A_{F_2}$ and $B_{F_2}$ are isomorphic as $F_2$-algebras.
\end{corollary}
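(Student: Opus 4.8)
The plan is to reduce to Theorem~\ref{VueltaAlCole} by interposing an algebraic closure of $K$ between $K$ and each $F_i$. Fix an algebraic closure $\overline{K}$ of $K$. Since $F_1$ and $F_2$ are algebraically closed and contain $K$, each $F_i$ contains a copy of $\overline{K}$: the set of elements of $F_i$ that are algebraic over $K$ forms an algebraically closed subfield, hence an algebraic closure of $K$ inside $F_i$, and any two algebraic closures of $K$ are $K$-isomorphic. Fix such embeddings $\overline{K}\hookrightarrow F_i$. The algebras $A_{\overline{K}}=A\otimes_K\overline{K}$ and $B_{\overline{K}}=B\otimes_K\overline{K}$ are finite dimensional over $\overline{K}$, with $\dim_{\overline{K}}A_{\overline{K}}=\dim_K A$ and $\dim_{\overline{K}}B_{\overline{K}}=\dim_K B$.

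Next I would record the base-change identity $A_{F_i}\cong (A_{\overline{K}})\otimes_{\overline{K}}F_i$ as $F_i$-algebras, and likewise for $B$. This is precisely the instance $R=\overline{K}$, $S=F_i$ of the isomorphism $(A_R)_S\cong A_S$ noted just after Definition~\ref{defextension} (realized by the natural map $\omega_{RS}$), using that $F_i$ is a commutative unital $\overline{K}$-algebra through the chosen embedding.

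Then I would apply Theorem~\ref{VueltaAlCole} to the field extension $\overline{K}\subseteq F_i$, with $\overline{K}$ algebraically closed, and to the pair of finite-dimensional $\overline{K}$-algebras $A_{\overline{K}}$, $B_{\overline{K}}$: it gives that $(A_{\overline{K}})\otimes_{\overline{K}}F_i\cong (B_{\overline{K}})\otimes_{\overline{K}}F_i$ as $F_i$-algebras if and only if $A_{\overline{K}}\cong B_{\overline{K}}$ as $\overline{K}$-algebras. Combining with the previous step, $A_{F_i}\cong B_{F_i}$ as $F_i$-algebras if and only if $A_{\overline{K}}\cong B_{\overline{K}}$ as $\overline{K}$-algebras, for each $i\in\{1,2\}$. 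Since the right-hand condition is independent of $i$, the equivalence $A_{F_1}\cong B_{F_1}\iff A_{F_2}\cong B_{F_2}$ follows. Equivalently, one may condense the last two steps into a single application of Corollary~\ref{mismaCaracteristica}, taking the algebraically closed field there to be $\overline{K}$, which sits inside both $F_1$ and $F_2$, and the algebras there to be $A_{\overline{K}}$ and $B_{\overline{K}}$.

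The argument is essentially formal. The only point requiring a little care is the first one—exhibiting a common algebraically closed subfield for both $F_i$—together with the remark that the particular embedding $\overline{K}\hookrightarrow F_i$ is irrelevant, since all algebraic closures of $K$ are $K$-isomorphic, so $A_{\overline{K}}$ is well defined up to $\overline{K}$-algebra isomorphism. I do not anticipate any genuine obstacle beyond this bookkeeping.
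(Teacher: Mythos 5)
Your proposal is correct and follows essentially the same route as the paper: interpose $\overline{K}$ between $K$ and the $F_i$, use $(A_{\overline{K}})_{F_i}\cong A_{F_i}$, and invoke Corollary~\ref{mismaCaracteristica} (equivalently, Theorem~\ref{VueltaAlCole} applied to $\overline{K}\subseteq F_i$). Your added care about fixing the embeddings $\overline{K}\hookrightarrow F_i$ and the well-definedness of $A_{\overline{K}}$ is a minor refinement of the paper's argument, not a different approach.
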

\begin{proof} First, we know that if $A$ is a $K$-algebra and $K_1,K_2$ are two fields verifying $K\subseteq K_1 \subseteq K_2$, then $(A_{K_1})_{K_2} \cong A_{K_2}$. Due to the fact that $F_1$ and $F_2$ are algebraically closed, it is clear that $\bar{K} \subseteq F_1,F_2$, with $\bar{K}$ denoting  the algebraic closure of $K$. Therefore, we can consider $A_{\bar{K}}$ and $B_{\bar{K}}$ two $\bar{K}$-algebras. We are now in the conditions of Corollary \ref{mismaCaracteristica} implying $(A_{\bar{K}})_{F_1} \cong (B_{\bar{K}})_{F_1}$ if and only if $(A_{\bar{K}})_{F_2} \cong (B_{\bar{K}})_{F_2}$.
\end{proof}

On the other hand, we can establish a more general case but taking into account that the algebraic closure of the prime field of the smallest one is contained in one of the largest fields.

\begin{corollary}\label{primefield} Let $K \subseteq F_i$ for $i=1,2$ field extensions. Consider $Q$ the prime field of $K$ and $A$ and $B$ two $K$-algebras of finite dimension whose structure constants (relative to suitable bases) are in $\bar{Q}$. Suppose $\bar{Q} \subseteq F_2$, if $A_{F_1} \cong B_{F_1}$, then $A_{F_2} \cong B_{F_2}$.
\end{corollary}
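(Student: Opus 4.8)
The plan is to push both algebras down to the algebraically closed field $\bar Q$ and then back up to $F_1$ and $F_2$, so that the only genuinely descending step is the one already supplied by Corollary \ref{dosalgcerrados}. Since the hypothesis $A_{F_1}\cong B_{F_1}$ forces $\dim_K A=\dim_K B$ (otherwise there is nothing to prove), we may take the distinguished bases $\mathcal B_A=\{e_i\}_{i\in\Lambda}$ of $A$ and $\mathcal B_B=\{u_i\}_{i\in\Lambda}$ of $B$ indexed by a common finite set $\Lambda$, with structure constants $\gamma_{ij}^k,\tau_{ij}^k\in\bar Q$. First I would let $L\subseteq K$ be the subfield generated over $Q$ by the finitely many elements $\gamma_{ij}^k$ and $\tau_{ij}^k$; since each of them is algebraic over $Q$, the extension $L/Q$ is finite, and the algebraic closure of $L$ inside any algebraically closed overfield is an algebraic closure of $Q$. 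As in Section \ref{fdc} I would form the finite-dimensional $L$-algebras $A_L:=\bigoplus_{i\in\Lambda}Le_i$ and $B_L:=\bigoplus_{i\in\Lambda}Lu_i$ with these structure constants; then $A\cong A_L\otimes_L K$ and $B\cong B_L\otimes_L K$ as $K$-algebras, hence $A\otimes_K M\cong A_L\otimes_L M$ and $B\otimes_K M\cong B_L\otimes_L M$ for every field extension $M$ of $K$.

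Next I would fix the field inclusions carefully. Because $\bar Q\subseteq F_2$ is algebraically closed and algebraic over $Q$, it coincides with the set of all elements of $F_2$ that are algebraic over $Q$, i.e.\ with the relative algebraic closure of $Q$ in $F_2$; as $L/Q$ is algebraic and $L\subseteq K\subseteq F_2$, this gives $L\subseteq\bar Q\subseteq F_2$ with the obvious inclusions. I would then set $\widetilde A:=A_L\otimes_L\bar Q$ and $\widetilde B:=B_L\otimes_L\bar Q$, which are finite-dimensional $\bar Q$-algebras.

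To descend the hypothesis to $\bar Q$, I would fix an algebraic closure $\overline{F_1}$ of $F_1$. Extending scalars along $F_1\hookrightarrow\overline{F_1}$, an $F_1$-algebra isomorphism $A\otimes_K F_1\cong B\otimes_K F_1$ produces an $\overline{F_1}$-algebra isomorphism $A_L\otimes_L\overline{F_1}\cong B_L\otimes_L\overline{F_1}$. Since $\overline{F_1}$ and $\bar Q$ are two algebraically closed fields each containing $L$, Corollary \ref{dosalgcerrados} applied to the finite-dimensional $L$-algebras $A_L,B_L$ gives $\widetilde A=A_L\otimes_L\bar Q\cong B_L\otimes_L\bar Q=\widetilde B$ as $\bar Q$-algebras. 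Finally I would move up to $F_2$ by ordinary base change: from $\widetilde A\cong\widetilde B$ I obtain $\widetilde A\otimes_{\bar Q}F_2\cong\widetilde B\otimes_{\bar Q}F_2$ as $F_2$-algebras, and by associativity of the tensor product together with $L\subseteq\bar Q\subseteq F_2$ one has $\widetilde A\otimes_{\bar Q}F_2\cong A_L\otimes_L F_2\cong A\otimes_K F_2=A_{F_2}$ and likewise $\widetilde B\otimes_{\bar Q}F_2\cong B_{F_2}$, whence $A_{F_2}\cong B_{F_2}$.

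The step I expect to require the most care is not conceptually hard but is bookkeeping-heavy: one must make sure that $L$ sits inside $\bar Q$ inside $F_2$ in exactly the same way it sits inside $K\subseteq F_2$, since otherwise the identifications $\widetilde A\otimes_{\bar Q}F_2\cong A_L\otimes_L F_2\cong A_{F_2}$ are not legitimate. This is the only place the hypothesis $\bar Q\subseteq F_2$ enters, through the identification of $\bar Q$ with the relative algebraic closure of $Q$ in $F_2$. Everything genuinely descending has been absorbed into Corollary \ref{dosalgcerrados} — hence ultimately into Theorem \ref{VueltaAlCole} and the Hilbert Nullstellensatz — so no further hard work is needed.
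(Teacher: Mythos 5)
Your proof is correct and follows essentially the same route as the paper's: both descend to the $\bar Q$-forms of $A$ and $B$ via Corollary \ref{dosalgcerrados} (hence ultimately Theorem \ref{VueltaAlCole} and the Nullstellensatz) and then base-change back up along $\bar Q\subseteq F_2$. The only difference is organizational: the paper first passes from $\overline{F_1}$ to $\overline{F_2}$ over $K$ and then descends to $\bar Q$ by Theorem \ref{VueltaAlCole}, whereas you apply Corollary \ref{dosalgcerrados} a single time, over the subfield $L$ generated by the structure constants, directly to the pair $(\overline{F_1},\bar Q)$.
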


\begin{proof} Since $A_{F_1} \cong B_{F_1}$, then $A_{\bar{F_1}} \cong B_{\bar{F_1}}$ because $K \subseteq \bar{F_1},\bar{F_2}$. Applying Corollary \ref{dosalgcerrados} we have $A_{\bar F_2}\cong B_{\bar F_2}$.
Suppose $\{a_i\}$ and $\{b_j\}$ are respectively the bases of $A$ and $B$ as $K$-algebras with structure constants in $\bar{Q}$. Write $X= \oplus_{i} \bar{Q}a_i$ and $Y= \oplus_{j} \bar{Q}b_j$ as $\bar{Q}$-algebras. Therefore, we have that $X_{\bar{F_2}} \cong Y_{\bar{F_2}}$ because $A_{\bar{F_2}} \cong  B_{\bar{F_2}}$. Since $\bar{Q} \subseteq \bar{F_2}$, by Theorem \ref{VueltaAlCole}, $X \cong Y$. Finally, $A_{F_2} \cong B_{F_2}$ considering the fact that $\bar{Q} \subseteq F_2$.
\end{proof}

To end this section, we can restate all the results in terms of algebra functors which will give a more global vision. To begin with, we show that Theorem \ref{VueltaAlCole} could be rewritten in a functorial way giving Theorem \ref{zapaterobis}. In other words, both statements are equivalent.

\begin{theorem}\label{zapaterobis}
Let $K \subseteq F$ be a field extension with $K$ algebraically closed. Let $\F,\mathcal{G}\colon \alg_K \to \Alg_K$ be two extension invariant algebra functors and assume that $\F(K)$  and ${\mathcal G}(K)$ are finite-dimensional as $K$-algebras. Then $\F(K) \cong {\mathcal G}(K)$ if and only if $\F(F)\cong{\mathcal G(F)}$.
\end{theorem}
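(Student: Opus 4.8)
\textbf{Proof plan for Theorem \ref{zapaterobis}.}

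The plan is to reduce this functorial statement directly to Theorem \ref{VueltaAlCole} by using extension invariance to replace the functors with the algebras $\F(K)$ and $\mathcal{G}(K)$. First I would set $A:=\F(K)$ and $B:=\mathcal{G}(K)$, which by hypothesis are finite-dimensional $K$-algebras. By Remark \ref{nuevomac}, since $\F$ and $\mathcal{G}$ are extension invariant, there are natural isomorphisms $\F\cong\underline{A}$ and $\mathcal{G}\cong\underline{B}$; evaluating at the object $F\in\alg_K$ gives $F$-algebra isomorphisms $\F(F)\cong A\otimes_K F=A_F$ and $\mathcal{G}(F)\cong B\otimes_K F=B_F$. (Here one should note that the isomorphism $\tau_F\colon\F(F)\to\F(K)\otimes_K F$ supplied by Definition \ref{vamonospaRonda} is, by construction, an isomorphism of $F$-algebras, so the identification $\F(F)\cong A_F$ is as $F$-algebras, not merely as rings or $K$-algebras; likewise for $\mathcal{G}$.)

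Next I would simply chain the equivalences. The statement $\F(F)\cong\mathcal{G}(F)$ as $F$-algebras is, via the isomorphisms above, equivalent to $A_F\cong B_F$ as $F$-algebras. By Theorem \ref{VueltaAlCole}, applied to the field extension $K\subseteq F$ with $K$ algebraically closed and to the finite-dimensional $K$-algebras $A$ and $B$, this holds if and only if $A\cong B$ as $K$-algebras, i.e.\ if and only if $\F(K)\cong\mathcal{G}(K)$. This gives both implications at once.

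There is no real obstacle here; the content of the theorem is entirely carried by Theorem \ref{VueltaAlCole}, and the only thing to be careful about is the bookkeeping of which category the isomorphisms live in — in particular that extension invariance yields $F$-algebra (not just $K$-algebra) isomorphisms $\F(F)\cong\F(K)_F$, and that finite-dimensionality of $\F(K)$ and $\mathcal{G}(K)$ is exactly the hypothesis Theorem \ref{VueltaAlCole} needs. One could also remark, as the surrounding text does, that the converse reading shows Theorem \ref{VueltaAlCole} is recovered from Theorem \ref{zapaterobis} by taking $\F=\underline{A}$ and $\mathcal{G}=\underline{B}$, so the two statements are equivalent.
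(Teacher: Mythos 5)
Your proposal is correct and is essentially the paper's own argument: the paper's proof simply sets $A:=\F(K)$, $B:=\mathcal{G}(K)$ and invokes Theorem \ref{VueltaAlCole}, with extension invariance supplying the identifications $\F(F)\cong A_F$ and $\mathcal{G}(F)\cong B_F$ as $F$-algebras exactly as you spell out. Your added care about which category the isomorphisms live in is a fair elaboration of the same route, not a different one.
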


\begin{proof} Note that it is straightforward considering, in Theorem \ref{VueltaAlCole}, $A:=\F(K)$ and $B:=\mathcal{G}(K)$.
\end{proof}

Also, as a consequence of Theorem \ref{VueltaAlCole}, we get the following results in terms of extension invariant functors.

\begin{theorem}\label{zapatero}
Let $\F,\mathcal{G}\colon \rng\to \Alg_\Z$ be two extension invariant algebra functors and assume that for some field $K$  we have $\F(K)\cong{\mathcal G}(K)$ isomorphic as finite-dimensional K-algebras. Let $Q$ be the prime field of $K$ and $R$ any ring containing $\bar Q$ as a subring. Then $\F(R)\cong{\mathcal G(R)}$.
\end{theorem}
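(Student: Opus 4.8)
\textbf{The plan.} The strategy is to reduce the statement to Theorem \ref{VueltaAlCole} (and its corollaries) via the dictionary provided by Remark \ref{nuevomac} and Proposition \ref{extenprop}. Since $\F$ and $\mathcal{G}$ are extension invariant, write $A:=\F(\Z)$ and $B:=\mathcal{G}(\Z)$, so that $\F\cong\underline{A}$ and $\mathcal{G}\cong\underline{B}$ as functors on $\rng$. Hence for any ring $R$ we have $\F(R)\cong A\otimes_\Z R$ and $\mathcal{G}(R)\cong B\otimes_\Z R$. In particular $\F(K)\cong A_K$ and $\mathcal{G}(K)\cong B_K$, so the hypothesis says $A_K\cong B_K$ as $K$-algebras, and these are finite-dimensional over $K$ by assumption.

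\textbf{Key steps.} First, I would observe that $A_K$ and $B_K$ are finite-dimensional $K$-algebras whose structure constants, relative to suitable $K$-bases, can be taken to lie in the prime field $Q$ of $K$: indeed $A_Q:=A\otimes_\Z Q$ and $B_Q:=B\otimes_\Z Q$ are $Q$-algebras with $(A_Q)_K\cong A_K$ and $(B_Q)_K\cong B_K$, and any $Q$-basis of $A_Q$, $B_Q$ gives structure constants in $Q\subseteq\bar Q$. (If $A_Q$ or $B_Q$ fails to be finite-dimensional over $Q$ one passes to the finite-dimensional $K$-subspaces spanned by a $K$-basis of $A_K$, $B_K$; the structure constants obtained this way still lie in some finitely generated subring, hence after the scalar extension to $\bar Q$ one may assume they are in $\bar Q$.) Next, from $A_K\cong B_K$ I want to descend: since $A_Q$ and $B_Q$ have structure constants in $\bar Q$ and $\bar Q$ is algebraically closed, I apply Corollary \ref{primefield} — or more directly the argument in its proof — with the two field extensions $Q\subseteq K$ and $Q\subseteq\bar Q$, concluding $A_{\bar Q}\cong B_{\bar Q}$ as $\bar Q$-algebras. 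Then for any ring $R$ containing $\bar Q$ as a subring, $R$ is a (commutative, unital) $\bar Q$-algebra, so by Proposition \ref{extenprop} applied to the functor $\underline{A}$ over the base $\bar Q$ we get $\F(R)\cong A_R\cong (A_{\bar Q})\otimes_{\bar Q} R\cong (B_{\bar Q})\otimes_{\bar Q} R\cong B_R\cong\mathcal{G}(R)$, which is the claim.

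\textbf{Main obstacle.} The delicate point is the descent from $A_K\cong B_K$ to $A_{\bar Q}\cong B_{\bar Q}$: one cannot directly apply Theorem \ref{VueltaAlCole} to the extension $K\subseteq\bar Q$ because there is no such inclusion. The correct route is the one used in the proof of Corollary \ref{primefield}: realize $A_K\cong (A_Q)_K$ and $B_K\cong (B_Q)_K$ where $A_Q,B_Q$ have structure constants in $\bar Q$, pass to $A_{\bar K}\cong B_{\bar K}$ over the algebraic closure $\bar K$ (using $K\subseteq\bar K$ and that a $K$-isomorphism extends after scalar extension), note $\bar Q\subseteq\bar K$, and then invoke Theorem \ref{VueltaAlCole} for the extension $\bar Q\subseteq\bar K$ applied to the $\bar Q$-algebras $\oplus_i\bar Q a_i$ and $\oplus_j\bar Q b_j$ cut out by those structure constants. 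A secondary technical nuisance is ensuring finite-dimensionality is preserved and that all the scalar-extension isomorphisms $(A_{R'})_{R''}\cong A_{R''}$ are used coherently; these are routine given the associativity of tensor product recorded after Definition \ref{defextension}.
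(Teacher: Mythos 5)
Your proposal is correct and takes essentially the same route as the paper: use extension invariance to pass from $K$ up to $\bar K$, apply Theorem \ref{VueltaAlCole} for the extension $\bar Q\subseteq\bar K$ to descend to an isomorphism of the finite-dimensional $\bar Q$-algebras $\F(\bar Q)\cong\mathcal{G}(\bar Q)$, and then tensor over $\bar Q$ up to any $R\supseteq\bar Q$ via Proposition \ref{extenprop}. The paper is just slightly more direct, setting $A:=\F(\bar Q)$, $B:=\mathcal{G}(\bar Q)$ and invoking Theorem \ref{VueltaAlCole} without the structure-constant detour through Corollary \ref{primefield}; note also that your parenthetical worry is vacuous, since $\dim_Q(A\otimes_{\Z}Q)=\dim_K(A\otimes_{\Z}K)<\infty$ automatically.
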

\begin{proof}First we take $A:=\F(\bar{Q})$ and $B:={\mathcal G}(\bar{Q})$. Since $\F$ is extension invariant, $\F(K)\cong{\mathcal G}(K)$ implies $\F({\bar K}) \cong{\mathcal G}({\bar K})$ as ${\bar K}$-algebras.  
Then, taking into account that $A$ and $B$ are finite-dimensional $\bar Q$-algebras,
by Theorem \ref{VueltaAlCole}, we know that $\F({\bar K})\cong{\mathcal G}({\bar K})$ implies $\F(\bar Q)\cong{\mathcal G}(\bar Q)$ as $\bar Q$-algebras. Finally $\F(R)\cong A\otimes_{\bar Q} R\cong B\otimes_{\bar Q} R \cong{\mathcal G}(R)$ as desired.
\end{proof}

Analogously to the equivalence between Theorem \ref{VueltaAlCole} and Theorem \ref{zapaterobis}, we can rewrite Corollaries \ref{mismaCaracteristica} and \ref{dosalgcerrados} in terms of functors. That is, under the corresponding hypotheses ($K$ algebraically closed field and $F_1$, $F_2$ containing $K$) in each result, we have that for $\F, \mathcal{G} \colon \alg_K \to \Alg_K$ extension invariant functors it is straightforward that $\F(F_1) \cong \mathcal{G}(F_1)$ if and only if $\F(F_2) \cong \mathcal{G}(F_2)$. So we have freedom to replace a field with another one and still to have an isomorphism, as long as both contain an algebraically closed one. Respectively for Corollary \ref{primefield} we could say $\F(F_1) \cong \mathcal{G}(F_1)$ implies $\F(F_2) \cong \mathcal{G}(F_2)$. 

We can specialize Theorem \ref{zapatero} to any extension invariant functor. Since our main motivation comes from Leavitt path algebras and related structures (path, Cohn and Steinberg algebras), we claim: 

\begin{corollary}\label{granuja}
Let $C_{K}^{X_{E_1}}(E_1)$ and $C_K^{X_{E_2}}(E_2)$ be isomorphic finite-dimensional relative Cohn path algebras. Let $Q$ be the prime field of $K$,
then for any field $K'$ of the same characteristic that $K$ such that $\bar{Q} \subseteq K'$, we have  $C_{K'}^{X_{E_1}}(E_1)\cong C_{K'}^{X_{E_2}}(E_2)$ as $K'$-algebras.
\end{corollary}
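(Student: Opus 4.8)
The plan is to reduce the statement to Theorem \ref{zapatero} by observing that relative Cohn path algebras are values of extension invariant functors. First I would recall from Subsection \ref{subseccionpathalgebra} that for any graph $E$ and any $X_E \subseteq \mathrm{Reg}(E)$ the assignment $R \mapsto C_R^{X_E}(E)$ is the object map of an extension invariant functor $C^{X_E}(E)\colon \rng \to \Alg_\Z$; this follows from \cite[Corollary 1.5.14]{AAS}, which gives $C_S^{X_E}(E) \cong C_R^{X_E}(E)\otimes_R S$ naturally. So I set $\F := C^{X_{E_1}}(E_1)$ and $\mathcal{G} := C^{X_{E_2}}(E_2)$, two extension invariant functors $\rng \to \Alg_\Z$.

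Next, the hypothesis of the corollary is precisely that $\F(K) = C_K^{X_{E_1}}(E_1) \cong C_K^{X_{E_2}}(E_2) = \mathcal{G}(K)$ as finite-dimensional $K$-algebras. This is exactly the hypothesis required to invoke Theorem \ref{zapatero} with this field $K$. That theorem then yields $\F(R) \cong \mathcal{G}(R)$ for \emph{any} ring $R$ containing $\bar Q$ as a subring, where $Q$ is the prime field of $K$. In particular, for any field $K'$ with $\bar Q \subseteq K'$ we obtain $C_{K'}^{X_{E_1}}(E_1) = \F(K') \cong \mathcal{G}(K') = C_{K'}^{X_{E_2}}(E_2)$ as $K'$-algebras. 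The condition that $K'$ have the same characteristic as $K$ is automatic here: if $\bar Q \subseteq K'$ then $K'$ contains (a copy of) the prime field $Q$, hence has the same characteristic as $Q$, which is that of $K$; I would simply note this to match the phrasing of the statement.

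The only point that requires a word of care — and the closest thing to an obstacle — is checking that the finite-dimensionality hypothesis of Theorem \ref{zapatero} is met and interpreted correctly: Theorem \ref{zapatero} asks that $\F(K) \cong \mathcal{G}(K)$ be an isomorphism of finite-dimensional $K$-algebras, which is exactly what "isomorphic finite-dimensional relative Cohn path algebras" means in the hypothesis. One should also make sure the functors land in $\rng \to \Alg_\Z$ (not $\alg_K \to \Alg_K$), which is indeed the form in which the Cohn path algebra functors were constructed, so Theorem \ref{zapatero} applies verbatim. With these identifications in place the corollary is immediate, so I would keep the proof to two or three sentences invoking Theorem \ref{zapatero}.
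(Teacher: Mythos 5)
Your proposal is correct and follows exactly the route the paper intends: the corollary is stated as a direct specialization of Theorem \ref{zapatero} to the extension invariant functors $C^{X_{E_1}}(E_1)$ and $C^{X_{E_2}}(E_2)$, which is precisely your argument. Your additional observation that the ``same characteristic'' hypothesis is automatic once $\bar{Q}\subseteq K'$ is a correct and harmless clarification.
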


Also an immediate consequence of Corollary \ref{mismaCaracteristica} for the relative Cohn path algebras is the following: 

\begin{corollary}\label{granujilla} Let $K$ and $K'$ be algebraically closed fields of the same characteristic and assume that $C_K^{X_{E_1}}(E_1)$ is finite-dimensional. Then
$C_K^{X_{E_1}}(E_1)\cong C_K^{X_{E_2}}(E_2)$ if and only if $C_{K'}^{X_{E_1}}(E_1)\cong C_{K'}^{X_{E_2}}(E_2)$.
\end{corollary}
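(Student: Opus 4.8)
The plan is to derive this as an immediate application of Corollary \ref{mismaCaracteristica}, with the algebraic closure of the common prime field playing the role of the algebraically closed ground field that sits inside both $K$ and $K'$.

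First I would fix the common ground field. Since $K$ and $K'$ are algebraically closed of the same characteristic, they have the same prime field $Q$ (namely $\Q$ or $\mathbb{F}_p$), and the relative algebraic closure of $Q$ inside $K$ (respectively inside $K'$) is a copy of $\bar{Q}$; so we may regard $\bar{Q}$ as an algebraically closed subfield of both $K$ and $K'$. This is the only step where the hypothesis of equal characteristic is used.

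Next I would descend the Cohn path algebras to $\bar{Q}$. Put $A:=C_{\bar{Q}}^{X_{E_1}}(E_1)$ and $B:=C_{\bar{Q}}^{X_{E_2}}(E_2)$, which are $\bar{Q}$-algebras. Because the relative Cohn path algebra functor $C^{X_E}(E)$ is extension invariant (equivalently, by \cite[Corollary 1.5.14]{AAS}, $C_S^{X_E}(E)\cong C_R^{X_E}(E)\otimes_R S$), applying it with $R=\bar{Q}$ and $S\in\{K,K'\}$ gives $C_K^{X_{E_1}}(E_1)\cong A\otimes_{\bar{Q}}K$, $C_{K'}^{X_{E_1}}(E_1)\cong A\otimes_{\bar{Q}}K'$, and the analogous isomorphisms with $B$ and $E_2$ in place of $A$ and $E_1$. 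Since scalar extension of a vector space preserves and reflects finite-dimensionality, $\dim_{\bar{Q}}A=\dim_K C_K^{X_{E_1}}(E_1)<\infty$ by hypothesis, so $A$ is a finite-dimensional $\bar{Q}$-algebra; and $B$ is finite-dimensional as well, because whichever of the two isomorphisms in the statement is assumed transports a finite-dimensional algebra onto $B\otimes_{\bar{Q}}K$ or $B\otimes_{\bar{Q}}K'$, forcing $\dim_{\bar{Q}}B<\infty$. (Alternatively, if $B$ were infinite-dimensional the asserted equivalence would hold vacuously, both algebras then having different dimensions over their respective ground fields.)

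Finally I would invoke Corollary \ref{mismaCaracteristica} with the algebraically closed field $\bar{Q}$, the containing fields $F_1=K$ and $F_2=K'$, and the finite-dimensional $\bar{Q}$-algebras $A$ and $B$: it yields $A\otimes_{\bar{Q}}K\cong B\otimes_{\bar{Q}}K$ as $K$-algebras if and only if $A\otimes_{\bar{Q}}K'\cong B\otimes_{\bar{Q}}K'$ as $K'$-algebras. Rewriting both sides through the isomorphisms of the previous paragraph gives exactly $C_K^{X_{E_1}}(E_1)\cong C_K^{X_{E_2}}(E_2)$ if and only if $C_{K'}^{X_{E_1}}(E_1)\cong C_{K'}^{X_{E_2}}(E_2)$. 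There is no serious obstacle here; the only point needing a little care is tracking the finite-dimensionality of the second algebra $B$ and keeping the base ring in the extension-invariance isomorphisms straight, since all the substantive content is already contained in Corollary \ref{mismaCaracteristica} and in the extension invariance of the Cohn path algebra functor.
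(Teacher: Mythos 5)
Your proposal is correct and follows exactly the route the paper intends: the paper presents this corollary as an immediate consequence of Corollary \ref{mismaCaracteristica}, and your argument simply supplies the details of that derivation (taking $\bar{Q}$ as the common algebraically closed subfield, descending the Cohn path algebras to $\bar{Q}$ via extension invariance, and noting that finite-dimensionality descends, with the vacuous case when $C^{X_{E_2}}_K(E_2)$ is infinite-dimensional handled correctly).
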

 
It is also true that if
$E_1,E_2$ are finite graphs with no cycles and $K$ algebraically closed, $K\subset F_i$ for $i=1,2$, then for the path algebras we have $F_1E_1\cong F_1E_2$ if and only if $F_2E_1 \cong F_2E_2$, and similarly for Steinberg, evolution and group algebras of finite dimension. For instance, for the finite-dimensional Steinberg algebras we have $A_{F_1}(G_1)\cong A_{F_1}(G_2)$ if and only if $A_{F_2}(G_1)\cong A_{F_2}(G_2)$ for groupoids $G_1$ and $G_2$. In the same vein, being $G_1$ and $G_2$ two finite groups
and using the group ring functors suitably, we have $F_1G_1\cong F_1G_2$ if and only if $F_2G_1\cong F_2G_2$. 

Being aware that, in the setting of relative Cohn path algebras, the finite-dimen\-siona\-lity imposes a strong restriction to the algebras under study, we will try to relax the hypothesis on finite-dimensionality of the algebras in the next section.

\section{The infinite dimensional case}\label{idc}
Now we will extend some of the results of the previous section to the case in which the algebras are of arbitrary dimension.
 Let $A$ and $B$ be $K$-algebras with bases $\mathcal B_A=\{e_i\}_{i\in \Lambda}$ and $\mathcal B_B=\{u_i\}_{i\in \Lambda}$ respectively. Let $f: A \to B$ be an isomorphism of $K$-algebras. Assume $f(e_i)= \sum_{j\in \Lambda}t_i^ju_j$ and $f^{-1}(u_i) =  \sum_{j\in \Lambda}s_i^je_j$ for certain $\{t_i^j\}_{i, j\in \Lambda}, \{s_i^j\}_{i, j\in \Lambda}\subseteq K$. By ${\rm Isom}_f(A, B)$ we will understand the set of all isomorphisms $g$ from $A$ into $B$ such that if $g(e_i)= \sum_{j\in \Lambda}{\overline t}_i^ju_j$ and $g^{-1}(u_i) =  \sum_{j\in \Lambda}{\overline s}_i^je_j$, then   $t_i^j=0$ implies ${\overline t}_i^j=0$ and $s_i^j=0$ implies ${\overline s}_i^j=0$. Note that, in particular, $f \in {\rm Isom}_f(A, B)$.
 
 As we did in Section \ref{fdc}, we can translate the isomorphism question for infinite-dimensional algebras in terms of the existence of zeros of a certain ideal of polynomials.

\begin{proposition}\label{correspondenciaInf} 
Let $A$ and $B$ be $K$-algebras with bases $\mathcal B_A=\{e_i\}_{i\in \Lambda}$ and $\mathcal B_B=\{u_i\}_{i\in \Lambda}$. Let $\{\gamma_{ij}^k\}_{i, j, k \in \Lambda}, \{\tau_{lm}^n\}_{l, m, n \in \Lambda}$ be the structure constants of $A$ and $B$ relative to the bases $\mathcal B_A$ and $\mathcal B_B$ respectively.

Assume that $f:A \to B$ is an isomorphism of $K$-algebras. Let $\{t_i^j\}_{i, j\in \Lambda}$ and $\{s_i^j\}_{i, j\in \Lambda}$ be subsets of $K$ such that $f(e_i)= \sum_{j\in \Lambda}t_i^ju_j$ and $f^{-1}(u_i) =  \sum_{j\in \Lambda}s_i^je_j$. Define
$$\Lambda_i^f:=\{j \in \Lambda \  \vert  \ t_i^j \neq 0\},\  \ \Lambda_i^{f^{-1}}  :=\{j \in \Lambda \ \vert \ s_i^j \neq 0\}, 
\ \ \Sigma_{i,j}  :=\{k\in\Lambda\ \vert\  \gamma_{ij}^k\ne 0\}.$$

Now, consider a set of indeterminates $X=\{T_i^j\}_{i,j\in\Lambda}\cup\{S_i^j\}_{i,j\in\Lambda}$ and the polynomials
\begin{equation}\label{ecuaciones2}
\begin{split}
   p_{ijn}& := \sum\limits_{k\in \Sigma_{i,j}}\gamma_{ij}^kT_k^n - \sum\limits_{l\in \Lambda_i^f, m\in\Lambda_j^f }\tau_{lm}^nT_i^lT_j^m, \ \ \  q_{ik} :=\sum_{j\in\Lambda_i^f}T_i^jS_j^k-\delta_{ik}, \\
    r_{ik} & :=\sum_{j\in\Lambda_i^{f^{-1}}}S_i^jT_j^k-\delta_{ik},  \ \ \ T_i^j \  (j\notin\Lambda_i^f), \; \; \; S_i^j \  (j\notin\Lambda_i^{f^{-1}}),
\end{split}
\end{equation}

\noindent
where $i,j,k,n\in\Lambda$.
Denote by $I_K^f$ the ideal of $K[X]$ generated by all the polynomials defined above.
Then there is a bijective map from $V(I_K^f)$ into ${\rm Isom}_f(A, B)$.
\end{proposition}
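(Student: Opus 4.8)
The plan is to imitate the finite-dimensional argument in Proposition~\ref{tecnicoUNO} and Corollary~\ref{correspondencia}, tracking carefully the role of the fixed isomorphism $f$ and the supports $\Lambda_i^f,\Lambda_i^{f^{-1}}$. First I would define the candidate map $\Phi\colon V(I_K^f)\to\isom_f(A,B)$. Given a point $(a)=\bigl((\overline t_i^j),(\overline s_i^j)\bigr)\in V(I_K^f)$, the vanishing of the polynomials $T_i^j$ for $j\notin\Lambda_i^f$ and $S_i^j$ for $j\notin\Lambda_i^{f^{-1}}$ forces $\overline t_i^j=0$ and $\overline s_i^j=0$ outside the respective supports; in particular each row $(\overline t_i^j)_{j}$ and $(\overline s_i^j)_{j}$ has finite support (since $\Lambda_i^f$ and $\Lambda_i^{f^{-1}}$ are finite, being the supports of the genuine linear maps $f,f^{-1}$), so the formulas $g(e_i):=\sum_{j\in\Lambda_i^f}\overline t_i^j u_j$ and $h(u_i):=\sum_{j\in\Lambda_i^{f^{-1}}}\overline s_i^j e_j$ define honest $K$-linear maps $g\colon A\to B$ and $h\colon B\to A$. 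Set $\Phi(a):=g$.

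The next step is to check that $g\in\isom_f(A,B)$. The polynomials $p_{ijn}$ evaluated at $(a)$ encode exactly the identities $g(e_ie_j)=g(e_i)g(e_j)$ on basis elements (here one must note that $\sum_{k\in\Sigma_{i,j}}\gamma_{ij}^k\,\overline t_k^n$ is a finite sum and equals the $u_n$-coordinate of $g(e_ie_j)$, while $\sum_{l\in\Lambda_i^f,m\in\Lambda_j^f}\tau_{lm}^n\,\overline t_i^l\overline t_j^m$ is the $u_n$-coordinate of $g(e_i)g(e_j)$; the restriction of the index ranges to the supports is harmless because the omitted terms carry a zero coefficient $\overline t_i^l$ or $\overline t_j^m$), so $g$ is a $K$-algebra homomorphism. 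The polynomials $q_{ik}$ give $h\circ g=\id_A$ on the basis $\{e_i\}$ and $r_{ik}$ give $g\circ h=\id_B$ on $\{u_i\}$, whence $g$ is bijective with $g^{-1}=h$; and by construction the supports of $g,g^{-1}$ are contained in those of $f,f^{-1}$, which is precisely the defining condition of $\isom_f(A,B)$. Conversely, I would define $\Psi\colon\isom_f(A,B)\to V(I_K^f)$ sending $g$ to the tuple of its structure coefficients $(\overline t_i^j),(\overline s_i^j)$ (with $g^{-1}(u_i)=\sum_j\overline s_i^j e_j$); the support condition on $\isom_f(A,B)$ guarantees $\overline t_i^j=0$ for $j\notin\Lambda_i^f$ and likewise for the $S$'s, so the degenerate polynomials vanish, and $g$ being a homomorphism with two-sided inverse makes $p_{ijn},q_{ik},r_{ik}$ vanish. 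Finally I would observe $\Phi$ and $\Psi$ are mutually inverse, essentially by construction, which gives the claimed bijection.

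\textbf{Main obstacle.} The one genuinely delicate point, as opposed to the routine bookkeeping, is the finiteness of supports and the attendant convergence issue: in infinite dimension a point of the variety is an element of $K^X$ with $X$ possibly infinite, so a priori the ``linear map'' $e_i\mapsto\sum_j\overline t_i^j u_j$ need not be well defined. This is exactly why the degenerate generators $T_i^j$ ($j\notin\Lambda_i^f$) and $S_i^j$ ($j\notin\Lambda_i^{f^{-1}}$) are thrown into the ideal $I_K^f$, and why one fixes $f$ in advance: the sets $\Lambda_i^f,\Lambda_i^{f^{-1}}$ are finite because $f,f^{-1}$ are genuine linear maps, so forcing $\overline t_i^j$ and $\overline s_i^j$ to vanish outside these finite sets makes every relevant sum finite. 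I would make this explicit at the start of the proof and then the remaining verifications reduce, row by row and triple by triple, to the same finite computations already carried out in the proof of Proposition~\ref{tecnicoUNO}. One should also note the asymmetry $h\circ g=\id_A$ on $\{e_i\}$ versus $g\circ h=\id_B$ on $\{u_i\}$ requires \emph{both} families $q_{ik}$ and $r_{ik}$ here (unlike the finite-dimensional case, where a one-sided inverse on a finite-dimensional space suffices), which is why Proposition~\ref{correspondenciaInf} lists the $r_{ik}$ as well.
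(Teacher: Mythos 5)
Your proposal is correct and follows essentially the same route as the paper's own proof: both directions are handled by reading the generators $T_i^j$ ($j\notin\Lambda_i^f$), $S_i^j$ ($j\notin\Lambda_i^{f^{-1}}$) as support constraints guaranteeing well-defined linear maps, and then reducing the remaining verifications to the finite computations of Proposition~\ref{tecnicoUNO}. Your added remarks on why the finiteness of $\Lambda_i^f,\Lambda_i^{f^{-1}},\Sigma_{i,j}$ is the crux, and on why both $q_{ik}$ and $r_{ik}$ are needed in infinite dimension, are accurate and consistent with the paper.
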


\begin{proof}
Note that, by construction, $\Lambda_i^f$,  $\Lambda_i^{f^{-1}}$ and $\Sigma_{i,j}$ are finite sets. In particular, this implies that the polynomials $p_{ijn}, q_{ik}$ and $r_{ik}$ are well-defined. 

Let  $z=((t_i^{\prime j}), (s_i^{\prime j}))$  be a zero in  $V(I_K^f)$; given any $i\in \Lambda$ we have
that the number of nonzero elements in $\{t_i^{\prime j}\}_{j\in \Lambda}$ and $\{s_i^{\prime j}\}_{j\in \Lambda}$ is finite. This allows us to
define $g: A \to B$ as the linear map having $(t_i^{\prime j})$ as the associated matrix relative to the bases 
$\mathcal B_A$ and $\mathcal B_B$. Let $h: B \to A$ be the linear map whose associated matrix relative to the bases $\mathcal B_B$ and $\mathcal B_A$ is  $(s_i^{\prime j})$.  Since $z$ is a zero of the polynomials in (\ref{ecuaciones2}), it is clear that $g$ is a homomorphism of $K$-algebras and that $hg=1_A$ and $gh=1_B$ (similarly to Proposition \ref{tecnicoUNO}). Moreover, taking into account the construction of $g$ we deduce that  $g\in {\rm Isom}_f(A, B)$. This way, we can define a map $\varphi: V(I_K^f) \to {\rm Isom}_f(A, B)$ such that $z\mapsto g$.

Conversely, take $g\in {\rm Isom}_f(A, B)$. Denote by $(a_i^j)$ and by $(b_i^j)$ the matrices of $g$ and $g^{-1}$ relative to the bases $\mathcal B_A$ and $\mathcal B_B$. 
Notice that \begin{equation}\label{TyS}\begin{cases}a_i^j=0 & \text{ if } j\notin\Lambda_i^f\cr b_i^j=0 & \text{ if } j\notin \Lambda_i^{f^{-1}}.\end{cases}
\end{equation}
Since $g$ is the inverse of  $g^{-1}$ (and conversely), we have that $z=((a_i^j), (b_i^j))$ is a zero of $\{q_{ik}\}_{i, k \in \Lambda}\cup \{r_{ik}\}_{i, k \in \Lambda}$. Apply that $g$ is a homomorphism of $K$-algebras to get that $z$ is a zero of $\{p_{ijn}\}_{i, j, n \in \Lambda}$ and also of $T_i^j$ and $S_i^j$ because of (\ref{TyS}). Thus we can define a map $\psi\colon {\rm Isom}_f(A, B) \to V(I_K^f)$ and 
it is not difficult to prove that $\psi$ is the inverse of 
$\varphi$, again similarly to Proposition \ref{tecnicoUNO}.
\end{proof}
Since the algebras we are dealing with are possibly infinite dimensional, in order to apply Theorem \ref{extension}, we need to consider additional hypothesis about the cardinal of the ground field to proceed similarly as in Section \ref{fdc}.

\begin{theorem}\label{VueltaAlCole2}
Let $K\subseteq F$ be a field extension with $K$ algebraically closed. Let $A$ and $B$ be $K$-algebras of the same dimension $\omega$. Assume that the cardinal of $K$ is strictly higher than $\omega$. Then $A_F$ and $B_F$ are isomorphic as $F$-algebras if and only if $A$ and $B$ are isomorphic as $K$-algebras.
\end{theorem}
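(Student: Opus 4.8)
The plan is to mimic the argument of Theorem \ref{VueltaAlCole}, replacing Corollary \ref{correspondencia} with Proposition \ref{correspondenciaInf} and the classical Nullstellensatz with the infinite-variable version of Theorem \ref{extension}, via the reformulation \eqref{Kaplansky}. First I would fix bases $\mathcal B_A=\{e_i\}_{i\in\Lambda}$ of $A$ and $\mathcal B_B=\{u_i\}_{i\in\Lambda}$ of $B$, both indexed by the same set $\Lambda$ of cardinality $\omega$ (possible since $\dim_K A=\dim_K B=\omega$), and let $\{\gamma_{ij}^k\}$, $\{\tau_{ij}^k\}$ be the corresponding structure constants. The direction "$A\cong B$ implies $A_F\cong B_F$" is the trivial one: any $K$-algebra isomorphism extends by scalars to an $F$-algebra isomorphism $A_F\cong B_F$. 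So the content is the converse.

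Assume $A_F\cong B_F$ as $F$-algebras. The bases $\mathcal B_A$, $\mathcal B_B$ extend to $F$-bases of $A_F$, $B_F$ with the same structure constants, so by Proposition \ref{correspondenciaInf} applied over $F$ (to the given isomorphism $f\colon A_F\to B_F$, producing scalars $t_i^j,s_i^j\in F$ and the finite index sets $\Lambda_i^f$, $\Lambda_i^{f^{-1}}$, $\Sigma_{i,j}$), there is an $F$-algebra homomorphism $\phi\colon F[X]\to$ points, i.e. $V(I_F^f)\ne\emptyset$, hence $1\notin I_F^f$. Now observe that $\Lambda_i^f$, $\Lambda_i^{f^{-1}}$, $\Sigma_{i,j}$ depend only on the combinatorial pattern of zeros of $(t_i^j),(s_i^j)$ and on the $\gamma_{ij}^k$; the polynomials $p_{ijn},q_{ik},r_{ik},T_i^j\ (j\notin\Lambda_i^f),S_i^j\ (j\notin\Lambda_i^{f^{-1}})$ generating $I_F^f$ all have coefficients in the prime field of $K$ — wait, more carefully: the coefficients are the $\gamma_{ij}^k$ and $\tau_{lm}^n$, which lie in $K$ — so these same polynomials make sense in $K[X]$ and generate an ideal $I_K^f\triangleleft K[X]$ with $i(I_K^f)\subseteq I_F^f$ under the inclusion $i\colon K[X]\hookrightarrow F[X]$. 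Since $1\notin I_F^f$ we get $1\notin I_K^f$, so $I_K^f\ne K[X]$.

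The key point now is to apply the Nullstellensatz for infinitely many variables: $K$ is algebraically closed, and the cardinality of the variable set $X=\{T_i^j\}\cup\{S_i^j\}$ is $|\Lambda\times\Lambda|+|\Lambda\times\Lambda|=\omega$ (as $\omega$ is infinite; if $\omega$ is finite the original Theorem \ref{VueltaAlCole} already applies, but the argument is uniform), while $|K|>\omega$ by hypothesis. So by reformulation \eqref{Kaplansky} of Theorem \ref{extension}, statement S2 holds, and $I_K^f\ne K[X]$ forces $V(I_K^f)\ne\emptyset$. By Proposition \ref{correspondenciaInf} over $K$, a point of $V(I_K^f)$ yields an element of $\mathrm{Isom}_f(A,B)$, in particular $A$ and $B$ are isomorphic as $K$-algebras, completing the proof.

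The main obstacle I anticipate is the bookkeeping around the index sets $\Lambda_i^f$, $\Lambda_i^{f^{-1}}$: these are defined in terms of the $F$-isomorphism $f$, so the ideal $I_K^f$ literally depends on the choice of $f\colon A_F\to B_F$, not just on $A,B$ over $K$. This is harmless — we only need \emph{one} $f$ to exist, we build $I_K^f$ from its zero-pattern, and we recover \emph{some} $K$-isomorphism in $\mathrm{Isom}_f(A,B)$ — but it must be stated cleanly that the generating polynomials of $I_K^f$ genuinely lie in $K[X]$ (their coefficients being structure constants in $K$) even though the sets $\Lambda_i^f$ were read off from $F$-data. One should also double-check the cardinality computation $|X|=\omega$ uses $\omega$ infinite, and note explicitly that the finite-$\omega$ case is covered either by Theorem \ref{VueltaAlCole} directly or by clause (i) of Theorem \ref{extension}.
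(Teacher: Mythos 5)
Your proposal is correct and follows essentially the same route as the paper: fix an $F$-isomorphism $f\colon A_F\to B_F$, use Proposition \ref{correspondenciaInf} to get $V(I_F^f)\ne\emptyset$, observe that the generators of $I_K^f$ have coefficients in $K$ so $1\notin I_K^f$, and conclude $V(I_K^f)\ne\emptyset$ via Theorem \ref{extension} under condition \eqref{Kaplansky}, which yields a $K$-isomorphism $A\to B$. Your extra remarks on the $f$-dependence of the index sets and on the cardinality $|X|=\omega$ are sound and, if anything, slightly more careful than the paper's own write-up.
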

\begin{proof} 
For the nontrivial part assume that $A_F\cong B_F$. Let 
$f\colon A_F\to B_F$ be an isomorphism. Let $\Lambda$ be a set of cardinal $\omega $.
Fix bases $\B_A=\{e_i\}_{i\in\Lambda}$ of $A$ and
$\B_B=\{u_i\}_{i\in\Lambda}$ of $B$.
Assume that the structure constants of $A$ and $B$ relative to the bases $\B_A$ and $\B_B$ are respectively given by:
$$e_ie_j=\sum_{k \in \Lambda} \gamma_{ij}^k e_k,\qquad 
u_iu_j=\sum_{k\in \Lambda} \tau_{ij}^k u_k.$$
Note that the structure constants of $A_F$ and $B_F$ relative to the bases $\{u_i \otimes 1\}_{i\in \Lambda}$ and $\{e_i \otimes 1\}_{i \in \Lambda}$ are in $K$. 
Assume that $f(e_i \otimes 1)=\sum_{j\in \Lambda} t_i^j(u_j \otimes 1)$ and 
$f^{-1}(u_i \otimes 1)=\sum_{j\in \Lambda} s_i^j (e_j \otimes 1)$ for any $i\in\Lambda$.
Define now the sets $\Lambda_i^f$, $\Lambda_i^{f^{-1}}$ and $\Sigma_{i,j}$ as in Proposition \ref{correspondenciaInf}.
Consider the set of
indeterminates $X=\{T_i^j\}_{i,j\in\Lambda}\cup\{S_i^j\}_{i,j\in\Lambda}$. In the polynomial algebras $K[X]$ and $F[X]$, define respectively the ideals $I_K^f$ and $I_F^f$ generated by the polynomials 
$p_{ijn}$, $q_{ik}$, $r_{ik}$,  $T_i^j$ (with $j\notin\Lambda_i^f)$ and $S_i^j$ (with $j\notin\Lambda_i^{f^{-1}}$). Then $V(I_F^f)\ne\emptyset$ whence $1\notin I_F^f$. Observe that $I_K^f \subseteq I_F^f$ because the polynomials defined as in \eqref{ecuaciones2} have their coefficients in $K$. Thus $1\notin I_K^f$ implying 
$V(I_K^f)\ne\emptyset$  by Hilbert’s Nullstellensatz Theorem \ref{extension} taking into account condition (\ref{Kaplansky}). So there is an isomorphism $A\to B$.
\end{proof}

\begin{corollary}\label{mismaCaracteristica2} Let $F_1$ and $F_2$ be two fields containing an algebraically closed field $K$. Consider $A$ and $B$ two $K$-algebras with the same dimension $\omega$. Suppose that the cardinal of $K$ is strictly higher than $\omega$. Then $A_{F_1}$ and $B_{F_1}$ are isomorphic as $F_1$-algebras if and only if $A_{F_2}$ and $B_{F_2}$ are isomorphic as $F_2$-algebras.
\end{corollary}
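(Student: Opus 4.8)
The plan is to deduce Corollary \ref{mismaCaracteristica2} from Theorem \ref{VueltaAlCole2} by exactly the same transitivity trick used to prove Corollary \ref{mismaCaracteristica} from Theorem \ref{VueltaAlCole}. Since $K$ is algebraically closed, $K \subseteq F_i$ for $i=1,2$ are field extensions with $K$ algebraically closed, and $A,B$ are $K$-algebras of dimension $\omega$ with $\vert K\vert > \omega$, the hypotheses of Theorem \ref{VueltaAlCole2} are met for each extension $K\subseteq F_1$ and $K \subseteq F_2$ separately.

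First I would apply Theorem \ref{VueltaAlCole2} to the extension $K \subseteq F_1$: this gives that $A_{F_1} \cong B_{F_1}$ as $F_1$-algebras if and only if $A \cong B$ as $K$-algebras. Then I would apply Theorem \ref{VueltaAlCole2} again, this time to the extension $K \subseteq F_2$, obtaining that $A \cong B$ as $K$-algebras if and only if $A_{F_2} \cong B_{F_2}$ as $F_2$-algebras. Chaining these two biconditionals yields $A_{F_1}\cong B_{F_1}$ if and only if $A_{F_2}\cong B_{F_2}$, which is the claim.

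There is essentially no obstacle here: the only point worth checking is that the cardinality hypothesis $\vert K\vert > \omega$ is a condition on $K$ alone (not on $F_1$ or $F_2$), so the same $\omega$ works as a bound in both applications of Theorem \ref{VueltaAlCole2}, and $A,B$ genuinely have dimension $\omega$ over $K$ by hypothesis. Thus the proof is a direct transitivity argument, entirely parallel to the proof of Corollary \ref{mismaCaracteristica}.

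\begin{proof}
Consider $A_{F_1}$ and $B_{F_1}$ as $F_1$-algebras. Since $K$ is algebraically closed, $\vert K\vert>\omega$ and $A, B$ have dimension $\omega$ as $K$-vector spaces, Theorem \ref{VueltaAlCole2} applied to the extension $K\subseteq F_1$ gives that $A_{F_1}\cong B_{F_1}$ as $F_1$-algebras if and only if $A\cong B$ as $K$-algebras. Likewise, Theorem \ref{VueltaAlCole2} applied to the extension $K\subseteq F_2$ gives that $A\cong B$ as $K$-algebras if and only if $A_{F_2}\cong B_{F_2}$ as $F_2$-algebras. By transitivity, $A_{F_1}\cong B_{F_1}$ as $F_1$-algebras if and only if $A_{F_2}\cong B_{F_2}$ as $F_2$-algebras.
\end{proof}
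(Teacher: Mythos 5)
Your proposal is correct and is essentially identical to the paper's own proof: both apply Theorem \ref{VueltaAlCole2} once to the extension $K\subseteq F_1$ and once to $K\subseteq F_2$, then chain the two biconditionals through the intermediate statement $A\cong B$ as $K$-algebras. Your additional remark that the cardinality hypothesis concerns $K$ alone is a sensible sanity check but does not change the argument.
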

\begin{proof} Let us consider $A_{F_1}$ and $B_{F_1}$ as $F_1$-algebras. By Theorem \ref{VueltaAlCole2} these two algebras are isomorphic if and only if $A$ and $B$ are isomorphic as $K$-algebras. Again, by Theorem \ref{VueltaAlCole2}, we have that $A$ and $B$ are isomorphic if and only if $A_{F_2}$ and $B_{F_2}$ are isomorphic as $F_2$-algebras. By transitivity, we have our claim.
\end{proof}

\begin{corollary} \label{dosalgcerrados2}Let $F_1$ and $F_2$ be two algebraically closed fields containing a field $K$. Suppose that $A$ and $B$ are two $K$-algebras with the same dimension $\omega$ and the cardinal of $K$ is strictly higher than $\omega$.  Then $A_{F_1}$ and $B_{F_1}$ are isomorphic as $F_1$-algebras if and only if $A_{F_2}$ and $B_{F_2}$ are isomorphic as $F_2$-algebras.
\end{corollary}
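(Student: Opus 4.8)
The plan is to mirror the proof of Corollary \ref{dosalgcerrados}, replacing the appeal to Corollary \ref{mismaCaracteristica} by its infinite-dimensional counterpart Corollary \ref{mismaCaracteristica2}. First I would recall that scalar extension is transitive: for fields $K\subseteq K_1\subseteq K_2$ and any $K$-algebra $A$ one has $(A_{K_1})_{K_2}\cong A_{K_2}$, which is the special case of the associativity isomorphism $\omega_{RS}$ discussed right after Definition \ref{defextension}. Since $F_1$ and $F_2$ are algebraically closed and contain $K$, each of them contains a copy of the algebraic closure $\bar K$ of $K$, so we may form the $\bar K$-algebras $A_{\bar K}$ and $B_{\bar K}$ and regard $F_1,F_2$ as field extensions of $\bar K$.

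The next step is to check that the hypotheses of Corollary \ref{mismaCaracteristica2} are met with $\bar K$ playing the role of the algebraically closed base field. Tensoring with a field extension does not change the dimension, so $A_{\bar K}$ and $B_{\bar K}$ both have dimension $\omega$ over $\bar K$; and since $K\subseteq\bar K$ we have $\vert\bar K\vert\geq\vert K\vert>\omega$. Hence Corollary \ref{mismaCaracteristica2}, applied to $A_{\bar K}$ and $B_{\bar K}$ over $\bar K$ with the two extensions $\bar K\subseteq F_1$ and $\bar K\subseteq F_2$, yields that $(A_{\bar K})_{F_1}\cong (B_{\bar K})_{F_1}$ as $F_1$-algebras if and only if $(A_{\bar K})_{F_2}\cong (B_{\bar K})_{F_2}$ as $F_2$-algebras.

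Finally, transitivity of scalar extension gives $(A_{\bar K})_{F_i}\cong A_{F_i}$ and $(B_{\bar K})_{F_i}\cong B_{F_i}$ as $F_i$-algebras for $i=1,2$; substituting these isomorphisms into the equivalence just obtained completes the argument. I do not anticipate any genuine obstacle in this proof: the only points that deserve a word of care are the invariance of dimension under base change and the fact that the cardinality hypothesis passes from $K$ to $\bar K$, both of which are routine. The real content has already been absorbed into Theorem \ref{VueltaAlCole2} (via the Nullstellensatz for infinitely many variables, Theorem \ref{extension} together with condition \eqref{Kaplansky}) and into Corollary \ref{mismaCaracteristica2}.
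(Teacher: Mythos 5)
Your proposal is correct and follows essentially the same route as the paper: pass to $\bar K\subseteq F_1,F_2$, note that the dimension and the cardinality hypothesis carry over to $\bar K$, apply Corollary \ref{mismaCaracteristica2}, and identify $(A_{\bar K})_{F_i}$ with $A_{F_i}$ by transitivity of scalar extension. The paper's own proof is the same argument, stated slightly more tersely (it leaves the final identification $(A_{\bar K})_{F_i}\cong A_{F_i}$ implicit).
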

\begin{proof}
Due to the fact that $F_1$ and $F_2$ are algebraically closed, it is clear that $\bar{K} \subseteq F_1,F_2$. Therefore, we can consider $A_{\bar{K}}$ and $B_{\bar{K}}$ as two $\bar{K}$-algebras. Observe that the cardinal of $\bar{K}$ is bigger than $\omega$. We are now in the conditions of Corollary \ref{mismaCaracteristica2} implying $(A_{\bar{K}})_{F_1} \cong (B_{\bar{K}})_{F_1}$ if and only if $(A_{\bar{K}})_{F_2} \cong (B_{\bar{K}})_{F_2}$.
\end{proof}
\begin{corollary}\label{primefield2} Let $K \subseteq F_i$ for $i=1,2$ be extension of fields. Consider $A$ and $B$ two $K$-algebras with the same dimension $\omega$. Suppose that the cardinal of $K$ is strictly higher than $\omega$ and $\bar{K} \subseteq F_2$. If $A_{F_1} \cong B_{F_1}$ then $A_{F_2} \cong B_{F_2}$.
\end{corollary}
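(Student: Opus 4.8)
The plan is to run the same three-step scalar-extension argument used for the finite-dimensional Corollary \ref{primefield}, now powered by the infinite-dimensional tools. The only preliminary remark needed is that extending scalars along a field extension $L\supseteq K$ does not change the dimension: a $K$-basis of $A$ becomes an $L$-basis of $A_L=A\otimes_K L$, so $\dim_L A_L=\dim_K A=\omega$, and likewise for $B$. Moreover, whenever $K\subseteq L\subseteq M$ are fields, the natural isomorphism $(A_L)_M\cong A_M$ recorded after Definition \ref{defextension} lets us regroup iterated base changes freely.

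First I would move the hypothesis to an algebraically closed coefficient field. Since $F_1\subseteq\bar{F_1}$, extending the isomorphism $A_{F_1}\cong B_{F_1}$ along this inclusion and using $(A_{F_1})_{\bar{F_1}}\cong A_{\bar{F_1}}$ yields $A_{\bar{F_1}}\cong B_{\bar{F_1}}$ as $\bar{F_1}$-algebras.

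Next I would apply Corollary \ref{dosalgcerrados2} to the two algebraically closed fields $\bar{F_1}$ and $\bar{K}$, each of which contains $K$. Since $A$ and $B$ are $K$-algebras of the same dimension $\omega$ with $\vert K\vert>\omega$, the hypotheses of that corollary are met, and it gives the equivalence $A_{\bar{F_1}}\cong B_{\bar{F_1}}$ if and only if $A_{\bar{K}}\cong B_{\bar{K}}$. The left-hand side holds by the previous step, so $A_{\bar{K}}\cong B_{\bar{K}}$ as $\bar{K}$-algebras. Finally, since $\bar{K}\subseteq F_2$ by hypothesis, extending scalars along $\bar{K}\hookrightarrow F_2$ and using $(A_{\bar{K}})_{F_2}\cong A_{F_2}$ produces $A_{F_2}\cong B_{F_2}$, which is the desired conclusion. (One could also descend straight from $\bar{F_1}$ to $\bar{K}$ using Theorem \ref{VueltaAlCole2} for the extension $\bar{K}\subseteq\bar{F_1}$ and the algebras $A_{\bar{K}},B_{\bar{K}}$, but Corollary \ref{dosalgcerrados2} packages exactly this step.)

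I do not expect a serious obstacle: the two substantive ingredients, the infinite-dimensional descent (Theorem \ref{VueltaAlCole2}, used inside Corollary \ref{dosalgcerrados2}) and the freedom to swap one algebraically closed base field for another (Corollary \ref{dosalgcerrados2} itself), are already available, so the whole argument reduces to chasing associativity isomorphisms for tensor products. The one point deserving a line of care lies inside the cited corollary rather than in the present argument, namely that the cardinality condition survives the passage to $\bar{K}$: if $K$ is infinite then $\vert\bar{K}\vert=\vert K\vert>\omega$, while if $K$ is finite then $\bar{K}$ is countably infinite, which forces $\omega$ to be finite and again gives $\vert\bar{K}\vert>\omega$; this is precisely the observation already made in the proof of Corollary \ref{dosalgcerrados2}, so no new idea is required.
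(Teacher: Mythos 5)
Your proof is correct and follows essentially the same route as the paper: extend scalars to $\bar{F_1}$, descend to $\bar{K}$, then extend along $\bar{K}\subseteq F_2$. The only difference is that you invoke Corollary \ref{dosalgcerrados2} for the middle descent where the paper cites Corollary \ref{mismaCaracteristica2}; your choice is if anything the more directly applicable one, since $K$ is not assumed algebraically closed in the present statement.
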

\begin{proof}  Since $A_{F_1} \cong B_{F_1}$, then $A_{\bar{F_1}} \cong B_{\bar{F_1}}$ taking into account $K \subseteq \bar{F_1}$. By Corollary \ref{mismaCaracteristica2} we have $A_{\bar{K}} \cong B_{\bar{K}}$. Because of $\bar{K} \subseteq F_2$, we get $A_{F_2} \cong B_{F_2}$.
\end{proof}

Finally, we can express the previous results in terms of extension invariant algebra functors. Concretely, Theorem \ref{VueltaAlCole2} gives Theorem \ref{VueltaAlCole2Funtorial} and from Corollary \ref{mismaCaracteristica2} we obtain Theorem \ref{aporotracosa}.

\begin{theorem}\label{VueltaAlCole2Funtorial}
Let $K \subseteq F$ be a field extension with $K$ algebraically closed. Let $\F,\mathcal{G}\colon \alg_K \to \Alg_K$ be two extension invariant algebra functors and assume that 
$\vert K\vert>\dim(\F(K)),\dim(\mathcal{G}(K))$.
Then $\F(K) \cong {\mathcal G}(K)$ if and only if $\F(F)\cong{\mathcal G(F)}$.
\end{theorem}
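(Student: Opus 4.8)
The plan is to reduce Theorem \ref{VueltaAlCole2Funtorial} to Theorem \ref{VueltaAlCole2} by exploiting Remark \ref{nuevomac}, which identifies extension invariant functors with the functors $\underline{A}$. First I would set $A:=\F(K)$ and $B:=\mathcal{G}(K)$, which are $K$-algebras of dimensions $\dim(\F(K))$ and $\dim(\mathcal{G}(K))$ respectively. Since both dimensions are strictly below $|K|$, letting $\omega:=\max\{\dim(\F(K)),\dim(\mathcal{G}(K))\}$ we still have $|K|>\omega$; after padding bases if necessary (or simply noting that the argument of Theorem \ref{VueltaAlCole2} only requires $|K|$ to exceed each dimension), we may regard $A$ and $B$ as having a common index set $\Lambda$ for bases, so Theorem \ref{VueltaAlCole2} applies verbatim to the pair $(A,B)$.

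The next step is to translate the functorial statements into statements about $A_F$ and $B_F$. By Remark \ref{nuevomac} we have natural isomorphisms $\F\cong\underline{\F(K)}=\underline{A}$ and $\mathcal{G}\cong\underline{\mathcal{G}(K)}=\underline{B}$; evaluating at $F$ gives $\F(F)\cong A\otimes_K F=A_F$ and $\mathcal{G}(F)\cong B\otimes_K F=B_F$ as $F$-algebras. Hence $\F(F)\cong\mathcal{G}(F)$ as $F$-algebras if and only if $A_F\cong B_F$ as $F$-algebras. Likewise, evaluating at $K$, $\F(K)\cong\mathcal{G}(K)$ as $K$-algebras is exactly $A\cong B$ as $K$-algebras (indeed this is a tautology once we have named $A,B$).

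Putting the two reductions together: $\F(F)\cong\mathcal{G}(F)$ iff $A_F\cong B_F$ iff (by Theorem \ref{VueltaAlCole2}, using $K$ algebraically closed and $|K|>\omega$) $A\cong B$ iff $\F(K)\cong\mathcal{G}(K)$, which is the desired equivalence. I do not anticipate a serious obstacle here; the only point requiring a little care is the bookkeeping of dimensions — Theorem \ref{VueltaAlCole2} is stated for two algebras of the \emph{same} dimension $\omega$ with $|K|>\omega$, whereas here $\F(K)$ and $\mathcal{G}(K)$ may have different (finite or infinite) dimensions. This is harmless: one either replaces $\omega$ by the maximum of the two dimensions and checks that the proof of Theorem \ref{VueltaAlCole2} goes through with bases indexed by a set of that larger cardinality (the extra basis vectors contribute indeterminates $T_i^j,S_i^j$ that are simply forced to behave as for a non-surjective-looking but still bijective map, exactly as the sets $\Lambda_i^f$ already encode), or, in the finite-dimensional subcase, invokes Theorem \ref{zapaterobis} instead. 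In the write-up I would simply remark that the statement is immediate from Theorem \ref{VueltaAlCole2} upon taking $A:=\F(K)$, $B:=\mathcal{G}(K)$ and using Remark \ref{nuevomac}, paralleling the proof of Theorem \ref{zapaterobis}.
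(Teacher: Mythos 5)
Your proposal is correct and matches the paper's own (very terse) argument: the paper simply states that Theorem \ref{VueltaAlCole2} gives Theorem \ref{VueltaAlCole2Funtorial} by taking $A:=\F(K)$ and $B:=\mathcal{G}(K)$, exactly paralleling the proof of Theorem \ref{zapaterobis}, and using extension invariance to identify $\F(F)$ with $A_F$ and $\mathcal{G}(F)$ with $B_F$. Your extra care about the two dimensions possibly differing is a reasonable addition (though the cleanest disposal is to note that unequal dimensions make both sides of the equivalence false, rather than to ``pad'' bases); the paper does not address this point at all.
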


\begin{theorem}\label{aporotracosa} Let $F_1$ and $F_2$ be two fields containing an algebraically closed field $K$. Consider $\F$ and $\mathcal{G}$ two extension invariant algebra functors with  $\vert K\vert>\dim(\F(K)),\dim(\mathcal{G}(K))$. Then $\F(F_1)$ and $\mathcal{G}(F_1)$ are isomorphic as $F_1$-algebras if and only if $\F(F_2)$ and $\mathcal{G}(F_2)$ are isomorphic as $F_2$-algebras.
\end{theorem}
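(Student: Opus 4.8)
The plan is to reduce the statement to Corollary \ref{mismaCaracteristica2} by passing to the algebras $A := \F(K)$ and $B := \mathcal{G}(K)$. First I would record that, since $\F$ and $\mathcal{G}$ are extension invariant, the natural isomorphisms of Definition \ref{vamonospaRonda} (equivalently, the identification $\F\cong\underline{\F(K)}$ of Remark \ref{nuevomac}, applied to the $K$-algebras $F_1,F_2$) provide $F_i$-algebra isomorphisms $\F(F_i) \cong A \otimes_K F_i = A_{F_i}$ and $\mathcal{G}(F_i) \cong B_{F_i}$ for $i = 1, 2$. Hence ``$\F(F_i) \cong \mathcal{G}(F_i)$ as $F_i$-algebras'' is equivalent to ``$A_{F_i} \cong B_{F_i}$ as $F_i$-algebras'', and it suffices to prove that $A_{F_1} \cong B_{F_1}$ if and only if $A_{F_2} \cong B_{F_2}$.

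Next I would observe that $\dim_K A = \dim_{F_i}(A_{F_i})$ and $\dim_K B = \dim_{F_i}(B_{F_i})$ for each $i$, since scalar extension preserves dimension. If $\dim_K A \neq \dim_K B$, then $A_{F_i} \not\cong B_{F_i}$ for both $i = 1, 2$, so both sides of the desired equivalence are false and there is nothing to prove. Otherwise $A$ and $B$ share a common dimension $\omega := \dim_K A = \dim_K B$, and by hypothesis $|K| > \omega$. Since $K$ is algebraically closed and $K \subseteq F_1, F_2$, Corollary \ref{mismaCaracteristica2} applies verbatim to $A$ and $B$ and yields $A_{F_1} \cong B_{F_1}$ if and only if $A_{F_2} \cong B_{F_2}$, which gives the claim.

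The argument is essentially a translation of Corollary \ref{mismaCaracteristica2} through the identification $\F \cong \underline{\F(K)}$, so I expect no serious obstacle; the only point requiring a little care is that the hypothesis merely bounds $\dim\F(K)$ and $\dim\mathcal{G}(K)$ separately rather than asserting they are equal, which forces the small case distinction above. One could alternatively argue in the style of Theorem \ref{zapaterobis}, invoking Theorem \ref{VueltaAlCole2} twice (to $A_{F_1}\cong B_{F_1}\iff A\cong B$ and $A\cong B\iff A_{F_2}\cong B_{F_2}$) and then using transitivity of isomorphism, which produces the same conclusion.
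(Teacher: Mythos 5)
Your proposal is correct and follows exactly the route the paper intends: the paper gives no explicit proof, stating only that Theorem \ref{aporotracosa} is obtained from Corollary \ref{mismaCaracteristica2}, and your reduction via $A:=\F(K)$, $B:=\mathcal{G}(K)$ and the extension-invariance isomorphisms $\F(F_i)\cong A_{F_i}$ is precisely that derivation. Your extra case distinction for $\dim_K A\neq\dim_K B$ (needed because the hypothesis bounds the two dimensions separately while Corollary \ref{mismaCaracteristica2} assumes they coincide) is a sensible bit of care that the paper glosses over.
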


In fact, if $A$ and $B$ are two $K$-algebras such that $\dim(A)=\dim(B)=\aleph_0$,
then for any uncountable algebraically closed field $K$ we meet the hypothesis of Corollary \ref{mismaCaracteristica2}. 
For the path algebras, if $E_1$ and $E_2$ are countable graphs and $K\subset F_i$ $i=1,2$, we have that $F_1E_1\cong F_1E_2$ if and only if $F_2E_1 \cong F_2E_2$. Moreover, for Steinberg algebras of countable dimension it satisfies $A_{F_1}(G_1)\cong A_{F_1}(G_2)$ if and only if $A_{F_2}(G_1)\cong A_{F_2}(G_2)$ for groupoids $G_1$ and $G_2$. In order to highlight the corresponding result, in the case of Leavitt path algebras, we establish the following corollary:

\begin{corollary}\label{andale} Suppose that $E_1$ and $E_2$ are countable graphs, $K$ an uncountable algebraically closed field and $K \subseteq F_1, F_2$ two field extensions. Then $L_{F_1}(E_1)\cong L_{F_1}(E_2)$ if and only if $L_{F_2}(E_1)\cong L_{F_2}(E_2)$.
\end{corollary}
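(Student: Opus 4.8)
The plan is to deduce Corollary \ref{andale} as a direct specialization of Corollary \ref{mismaCaracteristica2} (equivalently, of Theorem \ref{VueltaAlCole2}) applied to the Leavitt path algebra functor. First I would set $A := L_K(E_1)$ and $B := L_K(E_2)$, viewed as $K$-algebras, and check that the hypotheses of Corollary \ref{mismaCaracteristica2} are met: the field $K$ is algebraically closed and uncountable, and it contains itself, so with $F_1, F_2$ playing the roles of the two overfields, the only thing to verify is the dimension bound $\vert K\vert > \dim_K A, \dim_K B$. Since $E_1$ and $E_2$ are countable graphs, the path monoid $\mathrm{Path}(E_i)$ is countable, hence $L_K(E_i)$ is spanned by the countably many elements $\alpha\beta^*$ and so $\dim_K L_K(E_i) \le \aleph_0$; as $K$ is uncountable, $\vert K\vert > \aleph_0 \ge \dim_K L_K(E_i) = \omega$. (If either algebra is finite-dimensional one could instead invoke Corollary \ref{mismaCaracteristica}, but the uncountability of $K$ makes the infinite-dimensional statement apply uniformly, so there is no case split.)

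Next I would translate the scalar-extended algebras into Leavitt path algebras over the larger fields. By the extension-invariance of the functor $L(E_i)\colon \rng \to \Alg_\Z$ recalled in Subsection \ref{subseccionpathalgebra} (which rests on \cite[Corollary 1.5.14]{AAS}), we have natural isomorphisms $L_{F_j}(E_i) \cong L_K(E_i)\otimes_K F_j = A_{F_j}$ or $B_{F_j}$ as $F_j$-algebras, for $i,j \in \{1,2\}$. Therefore the statement $L_{F_1}(E_1)\cong L_{F_1}(E_2)$ is equivalent to $A_{F_1}\cong B_{F_1}$, and $L_{F_2}(E_1)\cong L_{F_2}(E_2)$ is equivalent to $A_{F_2}\cong B_{F_2}$.

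Finally I would invoke Corollary \ref{mismaCaracteristica2}: under the verified hypotheses, $A_{F_1}\cong B_{F_1}$ as $F_1$-algebras if and only if $A_{F_2}\cong B_{F_2}$ as $F_2$-algebras. Combining this equivalence with the two translations of the previous paragraph yields $L_{F_1}(E_1)\cong L_{F_1}(E_2)$ if and only if $L_{F_2}(E_1)\cong L_{F_2}(E_2)$, which is the claim. Alternatively, one could phrase the whole argument functorially through Theorem \ref{aporotracosa}, taking $\F := L(E_1)$ and $\mathcal{G} := L(E_2)$ restricted to $\alg_K$: these are extension invariant with $\dim_K \F(K), \dim_K \mathcal{G}(K) \le \aleph_0 < \vert K\vert$, and Theorem \ref{aporotracosa} gives $\F(F_1)\cong\mathcal{G}(F_1)$ iff $\F(F_2)\cong\mathcal{G}(F_2)$ at once. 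There is no real obstacle here; the only point requiring a line of justification is the dimension estimate $\dim_K L_K(E_i)\le\aleph_0$ for countable $E_i$, and everything else is bookkeeping with results already established.
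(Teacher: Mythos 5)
Your proposal is correct and follows exactly the route the paper intends: the corollary is the specialization of Corollary \ref{mismaCaracteristica2} (equivalently Theorem \ref{aporotracosa}) to the extension invariant Leavitt path algebra functor, using that $\dim_K L_K(E_i)\le\aleph_0<\vert K\vert$ for countable $E_i$. The dimension estimate and the identification $L_{F_j}(E_i)\cong L_K(E_i)\otimes_K F_j$ are precisely the two points the paper relies on in the paragraph preceding the corollary.
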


Note that if $K$ is an uncountable algebraically closed field, $E_1$ and $E_2$ countable graphs and $R, S$  commutative and unital $K$-algebras, then $L_R(E_1)\cong L_R(E_2)$ as $R$-algebras if and only if $L_S(E_1)\cong L_S(E_2)$ as $S$-algebras. Indeed, assuming $L_R(E_1)\cong L_R(E_2)$ we consider the field $K'=R/\mathfrak{m}$ where $\mathfrak{m}$ is any maximal ideal of $R$.  We have a field extension $K\subset K'$ and from the isomorphism $L_R(E_1)\cong L_R(E_2)$ we deduce an isomorphism $L_{K'}(E_1)\cong L_{K'}(E_2)$ so that Corollary \ref{andale} implies the existence of an isomorphism $L_K(E_1)\cong L_K(E_2)$ whence $L_S(E_1)\cong L_S(E_2)$.

\end{document}